\documentclass[11pt]{amsart}
\usepackage{amsmath,amssymb}
\usepackage{bm}
\usepackage{xcolor}
\usepackage{mathrsfs}
\usepackage{verbatim}
\usepackage{float}
\usepackage{setspace}
\usepackage{amsthm}
\usepackage{geometry}
\usepackage{color}

\newtheorem{myTheo}{Theorem}[section]
\newtheorem{mylem}{Lemma}[section]

\newtheorem{remark}{Remark}[section]
\newtheorem{Hypothesis}{Hypothesis}[section]

  \makeatletter
  \@addtoreset{equation}{section}
  \makeatother

\begin{document}

\title[3D $H^2$-nonconforming element]{3D $H^2$-nonconforming tetrahedral
    finite elements for the biharmonic equation}

\author {Jun Hu}
\address{LMAM and School of Mathematical Sciences, Peking University,
  Beijing 100871, P. R. China. }
\email{  hujun@math.pku.edu.cn}

\author {Shudan Tian}
\address{LMAM and School of Mathematical Sciences, Peking University,
  Beijing 100871, P. R. China. }
\email{ tianshudan@pku.edu.cn }

 \author {Shangyou Zhang}
\address{Department of Mathematical Sciences, University of Delaware,
    Newark, DE 19716, U.S.A. }
\email{  szhang@udel.edu }

\thanks{The first author was supported by  the NSFC Project 11271035,
  and  in part by the NSFC Key Project 11031006.}


\begin{abstract}
In this article, a family of $H^2$-nonconforming finite elements on
  tetrahedral grids is constructed for solving the biharmonic equation in 3D.
In the family,  the $P_\ell$ polynomial space is enriched by
   some high order polynomials for all $\ell\ge 3$ and the corresponding
finite element solution converges at the optimal order $\ell-1$ in $H^2$ norm.
Moreover, the result is improved for two low order cases by using $P_6$ and $P_7$ polynomials to enrich
   $P_4$ and $P_5$ polynomial spaces,  respectively.
The optimal order error estimate is proved.
The numerical results are provided to confirm the theoretical findings.
\end{abstract}

\keywords{
 nonconforming $H^2$ element, finite element method, biharmonic problem, tetrahedral grid}

\subjclass[2000]{ 65N15, 65N30S}

\maketitle

\def\a#1{\begin{align*}#1\end{align*}}\def\an#1{\begin{align}#1\end{align}}
\def\ad#1{\begin{aligned}#1\end{aligned}}

\section{Introduction}
We consider the biharmonic equation:
\an{\ad{
  \Delta^2 u &=f \qquad \text{ in } \Omega,\\
  u=\partial_{\mathbf{n}}u& =0 \qquad \text{ on }  \partial \Omega, } \label{problem}
  }
where $\Omega$ is a bounded 3D polyhedral domain, and $\partial_{\mathbf{n}}u=\nabla u^T\mathbf{n},\mathbf{n}$ is the unit outer normal
  vector to $\partial \Omega$.
The weak formulation of (\ref{problem}) reads: Find $u\in H^2_0(\Omega)$ such that
\begin{equation}
	a(u,v)=(f,v)\qquad \forall v\in H^2_0(\Omega).
	\label{wproblem}
\end{equation}
Here $H_0^2(\Omega)=\{v\in H^2(\Omega)\mid v=\partial_\mathbf{n} v=0 ~ \text{ on }
           \partial\Omega   \}$ and $H^2(\Omega)$ is the standard Sobolev space
          \cite{Soblev}.
The bilinear form in \eqref{wproblem} is defined by
$$
a(u,v)=\int_{\Omega} D^2 u: D^2 v \text{d}\mathbf{x},\qquad
    (f,v)=\int_{\Omega}fv\text{d}\mathbf{x},
$$
where  $D^2u=(\frac{\partial ^2u}{\partial x_ix_j})_{i,j},
     1\leq i,j\leq 3$,  a $3\times 3$ tensor.

There are many numerical methods for the biharmonic equation (\ref{wproblem})
    such as the finite element method.
In a finite element method,  a finite dimensional space $V_h$ of piecewise polynomials
  is constructed to approximate $H_0^2$ functions.
If the finite element space is a subspace of  $H_0^2(\Omega)$, it is called a conforming finite element.
Otherwise, it is called a nonconforming element.
In a conforming finite element method, the subspace $V_h$ must be globally $C^1$.
One advantage of a conforming element is that the error of a numerical solution
  only depends on the approximation power of the finite element space.
But a globally $C^1$ differentiable element requires a high degree of polynomials.
On 2D triangular meshes, the lowest order conforming element is
   the Argyris $P_5$ element \cite{Argyris, Ciarlet}.
Such an element can be reduced a little to the Bell element \cite{Ciarlet, ShiWang, Shi-Wang}
   with 18 degrees of freedom by restricting a $P_4$ polynomial to
     a $P_3$ polynomial for normal derivatives on three edges of a triangle.
On 3D tetrahedral meshes,
a family of conforming elements of polynomials of degree 9 and above was constructed by Zhang
   \cite{Zhang-C1, Zhang-4D}.
On rectangular meshes in 2D and 3D, the problem is relatively simple.
The classic Bogner-Fox-Schmit(BFS) $C^1$-$Q_3$ element \cite{Ciarlet, Shi-Wang}
  can be easily to be extended to any higher degree,
   higher space dimension and higher smoothness \cite{Hu-Huang-Zhang, ZhangC1Qk}.
A minimal polynomial degree $C^m$-$Q_k$ conforming element
   on $n$-dimensional rectangular grids was also proposed by Hu and Zhang \cite{Hu-Huang-Zhang, Hu-Zhang}.

 However, the strong continuity requirement and the high degrees of freedom with higher order derivatives of conforming elements are not computationally desirable.
There have been many nonconforming elements developed.
The space of a nonconforming element, with fewer degrees of freedom on each element, is
    not a subspace of $C^1$ functions, and even not a subspace of $C^0$ functions.
The minimal degree nonconforming element for the biharmonic equation in 2D is the
    Morley element, with six degrees of freedom on each triangle, which was extended to any dimension in \cite{Wang-Xu}.
Like the Morley element, both the Veubake elements \cite{Fraeijs}
   and the NZT element \cite{Wang-Shi-Xu} are convergent
     with order $O(h)$ in an energy norm.
In a higher order nonconforming finite element method, the $P_\ell$ polynomial is usually
   enriched with higher order polynomials.
A second order method on 2D triangular meshes was proposed by Gao, Zhang and Wang \cite{Gao-Zhang-Wang}
    with two $P_5$ polynomials added to the $P_3$ polynomial space. In
Wang, Zu and Zhang \cite{Wang-Zu-Zhang}, the $P_3$ polynomial space was enriched by six $P_6$ polynomials, and in Chen, Chen and Qiao \cite{Chen-Chen-Qiao}, 
     four $P_6$ polynomials, four $P_7$ polynomials and four $P_8$ polynomials
    to achieve a second order nonconforming element in 3D.
A family 3D  elements were constructed in \cite{Guzman}, using $P_{\ell+5}$ polynomials
  to enrich the $P_{\ell}$ polynomial space.

Recently,  a new estimate technology was proposed in \cite{Hu-Zhang-3D} by Hu and Zhang,
   generalizing the ideas of \cite{Hu-Ma-Shi,Hu-Zhang-T}.
The error estimate is based on two continuity hypotheses on the gradient jump and
  the function value jump across $d-1$ dimensional internal sides.
The theory was applied to construct a second order nonconforming element on
tetrahedral grids, enriching the $P_3$ polynomial space by eight $P_4$ polynomials on each tetrahedron. This results in the lowest polynomial degree element of second order approximation in 3D so far.
Compared with other $H^2$-nonconforming elements,  that element does not require vertex
   continuity.
The aim of this work is to extend the $P_3$ nonconforming element to a family of
   $P_\ell$ nonconforming elements for all $\ell$.
For large $\ell$, it is shown that the minimum polynomial degree of enriched polynomials is $\ell+4$,
  and such a family of $H^2$-nonconforming finite elements is desired on tetrahedral
   meshes.
It is noted that the polynomial degree of the elements in \cite{Guzman} is
   one degree higher than that of the elements in this work.
But for small $\ell$,  $\ell+4$ is not the minimum polynomial degree  for the optimal order of convergence.
For example, when $\ell=3$,  $\ell+1$ is the minimum degree as shown in \cite{Hu-Zhang-3D}.
In this work, for the $\ell=4$ and $\ell=5$ cases, the $P_4$ and $P_5$ polynomial spaces are enriched by
    $P_6$, $P_7$ polynomials,  respectively.
These are the lowest degree of enriched polynomials that can be found for these two cases so far.

The rest of the article is organized as follows. In section 2, we introduce two hypotheses and
    present an optimal energy-norm estimate based on the two hypotheses.
In section 3, we construct a family of $H^2$-nonconforming elements for all polynomial
  degrees on tetrahedral grids.
In section 4, a lower order polynomials is used to replace $P_7$ and $P_8$ polynomials for the third and fourth
   order element methods and all explicit basis functions for the two elements are given in this section and appendix.
Finally, we present some numerical results to confirm the theoretical results.

\section{Hypotheses and abstract theory}
  Let $\mathcal{T}_h=\{T\}$
   be a regular tetrahedral grid on $\Omega$, cf. \cite{Brenner-Scott} and $h$ is the mesh size of $\mathcal{T}_h$.
  Let $F$ and  $e$ be a two dimensional face triangle and a one dimensional
  edge of element $T$, respectively. Let
 $P_{\ell}(G)$ represent the space of polynomials of degree less than or equal to $\ell$ over $G$.
  Let $\mathcal{F}_h$ be the set of all two dimensional face triangles of $\mathcal{T}_h$. Let $\omega_F$ be the union of two elements sharing the two dimensional face triangle $F$.
  Given integer $\ell>0$,  let $V_{h,\ell}$ be the nonconforming element  space of $H_0^2(\Omega)$ on
     the mesh $\mathcal{T}_h$, defined in \eqref{Vh} below. Let
       $\Pi_{\ell,G}$ be the $L^2$ projection operator onto $P_{\ell}(G)$. 
     
 The finite element problem, discretizing the biharmonic
 equation(\ref{problem}), is:
   Find
$u_h\in V_{h,\ell}$ such that
\begin{equation}
	(D^2u_h,D^2v_h)_h=(f,v_h)\quad \forall v_h\in V_{h,\ell},
  \label{dproblem}
\end{equation}
where the discrete inner product is defined as $(\cdot,\cdot)^2_h=\sum_{T\in \mathcal{T}_h}(\cdot,\cdot)^2_T$.
The existence and uniqueness of solutions in problem (\ref{dproblem}) follow from
  the norm $|\cdot|_{2,h}=(D^2\cdot,D^2\cdot)_h^{1/2}$ (to be proved)
    on $V_{h,\ell}$.
By the second Strang's Lemma \cite{Shi-Wang}, we have
$$
|u-u_h|_{2,h}\leq C \inf_{v_h\in V_{h,\ell}}|u-v_h|_{2,h}
  +\sup_{0\neq w_h\in V_{h,\ell}}\frac{|(f,w_h)-(D^2u,D^2w_h)_h|}{|w_h|_{2,h}}.
$$
To bound the error in the second term, i.e., the consistency error,
    the following two hypotheses were proposed in \cite{Hu-Zhang-3D}.
\begin{Hypothesis}
	For all internal face triangles $F$ of $\mathcal{T}_h$,  assume
\begin{equation} \label{H1}
	\int_F [\nabla_h v_h]\cdot q\mathrm{d} S =0 \quad
   \forall q\in (P_{\ell-2}(F))^3 \text{ \ and \ }
       \forall v_h\in V_{h,\ell},
\end{equation}
where $[\cdot]$ is the jump across $F$, $\nabla_h$ and $D_h^2$ are the discrete counterpart of $\nabla$ and $D^2$, respectively, defined element wise.
For all domain boundary face triangles $F$ of $\mathcal{T}_h$, assume
\begin{equation} \label{H12}
\int_F\nabla_h v_h\cdot q\mathrm{d} S=0 \quad
    \forall q\in (P_{\ell-2}(F))^3 \text{ \ and \ } \forall v_h\in V_{h,\ell}.
\end{equation}
\label{hyp1}
\end{Hypothesis}

\def\b#1{\mathbf{#1}}\def\r#1{\mathrm{#1}}

\begin{Hypothesis}
	For all internal face triangles $F$ of $\mathcal{T}_h$,  assume
	\begin{equation} \label{H2}
	\int_F [v_h]q\mathrm{d} S=0 \quad \forall q\in P_{\ell-3}(F)
      \text{ \ and \ } \forall v_h\in V_{h,\ell}.
	\end{equation}
For all domain boundary face triangles $F$ of $\mathcal{T}_h$, assume
		\begin{equation}  \label{H22}
		\int_F v_hq\mathrm{d} S=0
   \quad \forall q\in P_{\ell-3}(F) \text{ and } \forall v_h\in V_{h,\ell}.
		\end{equation}
\label{hyp2}
\end{Hypothesis}

\begin{myTheo}\label{Hu}
[Theorem 2.1 in \cite{Hu-Zhang-3D}]   Assume $V_{h,\ell}$ satisfies Hypotheses \ref{hyp1} and
     \ref{hyp2} with $\ell\geq 3$, and that the seminorm
     $\|D^2_h\cdot\|_{0}=(D^2\cdot,D^2\cdot)_h^{1/2}$ defines a norm  over the nonconforming finite element space $V_{h,\ell}$.
   Let $u_h$ and $u$ be the solution of (\ref{dproblem}) and (\ref{problem}), respectively.
   Then
\begin{align*}
\|D^2_h(u-u_h)\|_{0}&\leq C \inf_{s_h\in V_{h,\ell}}\|D^2_h(u-s_h)\|_{0}+
C\left(\sum_{F\in\mathcal{F}_h}\|(I-\Pi_{\ell,\omega_F})D^2u\|^2_{0,\omega_F}\right)^{1/2}\\
   &\quad \ + \left(\sum_{T\in\mathcal{T}_h}h^4\|(I-\Pi_{\ell-1,T})f\|^2_{0,T}\right)^{1/2}.
\end{align*}

\end{myTheo}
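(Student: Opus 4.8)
The plan is to read the first term directly off the second Strang lemma already displayed and to spend all the effort on the consistency functional
\[
E_h(w_h):=(f,w_h)-(D^2u,D^2w_h)_h,\qquad w_h\in V_{h,\ell},
\]
whose normalized supremum is the only remaining quantity to control. First I would integrate by parts twice on each element $T$, using that $u$ is smooth and solves \eqref{problem}, so that $\operatorname{div}\operatorname{div}D^2u=\Delta^2u=f$. The interior volume contributions then cancel against $(f,w_h)$, and after collecting the boundary integrals face by face $E_h$ collapses to a pure sum over faces,
\[
E_h(w_h)=\sum_{F\in\mathcal F_h}\Big(\int_F\partial_{\mathbf n}(\Delta u)\,[w_h]\,\mathrm dS-\int_F(D^2u\,\mathbf n)\cdot[\nabla_hw_h]\,\mathrm dS\Big),
\]
where on an internal face $[\cdot]$ is the jump and on a boundary face it is the one-sided trace; the coefficients $D^2u$ and $\partial_{\mathbf n}(\Delta u)$ are single valued because $u$ is globally smooth.

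The two Hypotheses are designed exactly for these two integrals. Since $[\nabla_hw_h]$ is $L^2(F)$-orthogonal to $(P_{\ell-2}(F))^3$ by \eqref{H1} and $[w_h]$ is orthogonal to $P_{\ell-3}(F)$ by \eqref{H2} (with the boundary analogues \eqref{H12} and \eqref{H22}), I may subtract from the smooth coefficients their $L^2(F)$-projections onto these spaces without changing either integral. A face-wise Cauchy--Schwarz then leaves, on each face, the product of a projection error of the data and an $L^2(F)$-norm of a jump of $w_h$.

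Next I would estimate the two factors. For the jumps, the point is that the orthogonality supplied by Hypotheses \ref{hyp1} and \ref{hyp2} upgrades the scaled trace inequalities, gaining extra half-powers of $h$ that a direct trace estimate (which only offers a factor $h^{-1/2}$) cannot; these gains are what keep the consistency error at the optimal order. For the data, I would insert the patch projection $\Pi_{\ell,\omega_F}D^2u$ into the first integral and apply a trace inequality on $\omega_F$, whose $h^{-1/2}$ cancels the half-power just gained and produces exactly $\|(I-\Pi_{\ell,\omega_F})D^2u\|_{0,\omega_F}$; for the second integral I would similarly pass from the face projection error of the third-order datum $\partial_{\mathbf n}(\Delta u)$ to a volume quantity and trade one further power of $h$ for a derivative, turning $\partial_{\mathbf n}(\Delta u)$ into $\Delta^2u=f$ and producing the factor $h^2\|(I-\Pi_{\ell-1,T})f\|_{0,T}$. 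Summing over faces and elements and applying the discrete Cauchy--Schwarz inequality collects these into the two data norms on the right-hand side, after which division by $|w_h|_{2,h}$ finishes the bound.

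The hard part will be this last scaling analysis, and in particular squeezing out the sharp projection degree: the gradient-jump integral is only orthogonal to $P_{\ell-2}(F)$, yet the theorem asserts the smaller error $\|(I-\Pi_{\ell,\omega_F})D^2u\|_{0,\omega_F}$ with the full degree $\ell$ on the patch, so the naive substitution of a degree $\ell-2$ projection is not enough. Recovering the two extra degrees while matching every power of $h$ so that no regularity or order is lost is precisely the new estimate technology of \cite{Hu-Zhang-3D}; the remaining bookkeeping---treating the boundary faces through \eqref{H12} and \eqref{H22} and checking that all constants are $h$-independent---is routine by comparison.
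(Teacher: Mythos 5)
You should first note what the paper actually does with this statement: it does \emph{not} prove it. Theorem \ref{Hu} is imported verbatim as Theorem 2.1 of \cite{Hu-Zhang-3D}; the present paper only records Hypotheses \ref{hyp1}--\ref{hyp2} and the second Strang lemma around it, and then uses the theorem as a black box. So there is no in-paper proof to compare against, and your proposal has to be judged as a reconstruction of the cited proof.

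As such a reconstruction, your skeleton is the right one (second Strang lemma; elementwise integration by parts using $\operatorname{div}\operatorname{div}D^2u=\Delta^2u=f$; reduction to face integrals of $(D^2u\,\mathbf{n})\cdot[\nabla_h w_h]$ and $\partial_{\mathbf n}(\Delta u)[w_h]$; orthogonality from \eqref{H1}--\eqref{H22}; the gain of $h^{1/2}$ in $\|[\nabla_h w_h]\|_{0,F}\le Ch^{1/2}\|D_h^2w_h\|_{0,\omega_F}$), but the two steps that make the theorem true in its stated, regularity-free form are exactly the ones you assert by power counting and then defer back to \cite{Hu-Zhang-3D}. (i) Cauchy--Schwarz on $F$ plus a trace inequality does not ``produce exactly'' $\|(I-\Pi_{\ell,\omega_F})D^2u\|_{0,\omega_F}$: applying $\|v\|_{0,F}\le C\bigl(h^{-1/2}\|v\|_{0,\omega_F}+h^{1/2}|v|_{1,\omega_F}\bigr)$ to $v=(I-\Pi_{\ell,\omega_F})D^2u$ leaves the derivative term $h^{1/2}|v|_{1,\omega_F}$, which is \emph{not} bounded by $\|v\|_{0,\omega_F}$ without extra regularity of $u$; eliminating it (by inserting an intermediate polynomial and using inverse estimates on the polynomial part) is the heart of the technology of \cite{Hu-Ma-Shi,Hu-Zhang-3D}, and it is precisely what is absent here. (ii) The face integral of $\partial_{\mathbf n}(\Delta u)[w_h]$ cannot be ``traded'' into $h^2\|(I-\Pi_{\ell-1,T})f\|_{0,T}$ by dimensional analysis: a trace of third derivatives of $u$ is not controlled by $\|\Delta^2u\|_{0,T}$, and indeed it need not even exist in $L^2(F)$ under the theorem's assumptions, so a correct proof must avoid such traces altogether. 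The natural mechanism producing the $f$-term is an elementwise splitting $(f,w_h)_T=((I-\Pi_{\ell-1,T})f,(I-\Pi_{\ell-1,T})w_h)_T+(\Pi_{\ell-1,T}f,w_h)_T$ combined with $\|(I-\Pi_{\ell-1,T})w_h\|_{0,T}\le Ch^2\|D^2w_h\|_{0,T}$ (valid since $\ell-1\ge 1$), not an estimate of the $\partial_{\mathbf n}(\Delta u)$ face integral. Finally, since your closing step leans on ``the new estimate technology of \cite{Hu-Zhang-3D}'' for both (i) and (ii), the proposal is circular as a proof of the very theorem quoted from that reference; as it stands it is a correct outline with the decisive estimates missing.
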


\section{A family of $H^2$ non-conforming finite elements}
 In this section, we construct a family of $H^2$-nonconforming
   finite element spaces  for the biharmonic problem in 3D.
   Based on Hypothesis \ref{hyp1} and \ref{hyp2},
 the dual basis of the finite element space in 3D, i.e., the degrees of freedom of the finite element,
  consists of
\begin{align}
&\mathbb{E}^{(\ell)}(v)= \frac{1}{|e|}\int_ev P_{\ell-2}(e)\mathrm{d}s \ \text{ on all edges};
\label{degree1}\\
&\mathbb{F}^{(\ell)}(v)=\frac{1}{|F|}\int_Fv P_{\ell-3}(F)\mathrm{d} S \ \text{ on all face triangles};
\label{degree2}\\
&\mathbb{T}^{(\ell)}(v)= \frac{1}{|T|}\int_Tv P_{\ell-4}(T)\mathrm{d}\mathbf{x}
      \ \text{ on all tetrahedrons};
\label{degree3}\\
&\mathbb{N}^{(\ell)}(v)=
\frac{1}{|F|}\int_F\partial_{\mathbf{n}}v P_{\ell-2}(F)\mathrm{d}S,
    \ \text{ on all face triangles}.
\label{degree4}
\end{align}
When $\ell=2$, (\ref{degree2}) and (\ref{degree3}) drop and this element is
  the 3D Morley element.  When $\ell=3$, (\ref{degree3}) drops. Therefore, in this article, we only consider the cases $\ell\geq 3.$

What is the dimension of the dual basis \eqref{degree1}--\eqref{degree4}?
How many high order polynomials are needed to enrich each polynomial space $P_{\ell}(T)$ so that
  the enriched space can fulfill \eqref{degree1}--\eqref{degree4}?
What is the minimum degree of enriched polynomials for the base polynomial space $P_{\ell}(T)$?
These questions would lead to a proper definition of the enriched space $\mathcal{P}_{\ell}^+(T)$ below. Next, we define the enriched polynomial space $\mathcal{P}_\ell^+(T)$.


\begin{mylem}
	For $\ell\geq 27$, the number of local degrees of freedom of
   \eqref{degree1}--\eqref{degree4} is bigger than the  dimension of
	$P_{\ell+3}(T)$.
	\label{dim}
\end{mylem}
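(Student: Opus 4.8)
The plan is to count the local degrees of freedom in \eqref{degree1}--\eqref{degree4} as an explicit polynomial in $\ell$, count $\dim P_{\ell+3}(T)$ as another polynomial in $\ell$, and then compare the two polynomials for $\ell\ge 27$.

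First I would compute the dimension of the dual basis. Fix one tetrahedron $T$. Each edge $e$ contributes $\dim P_{\ell-2}(e)=\ell-1$ functionals via \eqref{degree1}, and $T$ has $6$ edges. Each face triangle $F$ contributes $\dim P_{\ell-3}(F)=\binom{\ell-1}{2}$ functionals via \eqref{degree2} plus $\dim P_{\ell-2}(F)=\binom{\ell}{2}$ normal-derivative functionals via \eqref{degree4}, and $T$ has $4$ faces. The interior contributes $\dim P_{\ell-4}(T)=\binom{\ell-1}{3}$ functionals via \eqref{degree3}. Summing gives a cubic polynomial
\begin{equation*}
N(\ell)=6(\ell-1)+4\binom{\ell-1}{2}+4\binom{\ell}{2}+\binom{\ell-1}{3}.
\end{equation*}
The main technical care here is to make sure I am counting functionals \emph{locally} on a single element and not double-counting shared edges and faces; for the purpose of this lemma I simply take the naive per-element count, which is what must be compared against $\dim P_{\ell+3}(T)=\binom{\ell+6}{3}$.

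Next I would expand both $N(\ell)$ and $\binom{\ell+6}{3}=\tfrac16(\ell+6)(\ell+5)(\ell+4)$ into standard cubic polynomials in $\ell$ and form the difference $D(\ell)=N(\ell)-\binom{\ell+6}{3}$. Since $\binom{\ell-1}{3}$ and $\binom{\ell+6}{3}$ share the same leading coefficient $\tfrac16\ell^3$, the cubic terms cancel in $D(\ell)$, leaving a quadratic. The sign of that quadratic's leading coefficient will be positive (the four face contributions and the shift inside $\binom{\ell+6}{3}$ control the $\ell^2$ term), so $D(\ell)$ is an upward-opening quadratic; I would then solve $D(\ell)>0$ and verify that the larger root lies at or below $\ell=27$, so that $D(\ell)>0$ for all $\ell\ge 27$.

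The only real obstacle is arithmetic bookkeeping: the cancellation of the leading cubic terms is automatic, but the resulting quadratic has moderately large coefficients, and I must locate its positive root accurately enough to justify the threshold $\ell=27$ rather than merely an asymptotic statement. Once the quadratic $D(\ell)$ is in hand, checking that $D(27)>0$ and that $D$ is increasing for $\ell\ge 27$ (equivalently, that $27$ exceeds the vertex of the parabola) finishes the proof. I expect the threshold $27$ to be exactly the smallest integer past the positive root, so the estimate is sharp for this counting argument.
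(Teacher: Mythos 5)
Your proposal is correct and follows essentially the same route as the paper: count the per-element functionals ($6(\ell-1)$ edge moments, $4\binom{\ell-1}{2}$ face moments, $4\binom{\ell}{2}$ normal-derivative moments, $\binom{\ell-1}{3}$ interior moments), giving $\tfrac16(\ell^3+18\ell^2-\ell-18)$, and compare with $\dim P_{\ell+3}(T)=\tfrac16(\ell^3+15\ell^2+74\ell+120)$; the difference $\tfrac12(\ell^2-25\ell-46)$ is an upward quadratic whose larger root is $\tfrac{25+\sqrt{809}}{2}\approx 26.7$, so positivity holds exactly for integers $\ell\geq 27$, confirming your expectation that the threshold is sharp.
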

\begin{proof}
	The number of degrees of freedom of (\ref{degree1}) to (\ref{degree4}) is
	$\frac{1}{6}
	(\ell^3+18\ell^2-\ell-18)$
	which is bigger than the dimension  $\frac{1}{6}(\ell^3+15\ell^2+74\ell+120)$ of $P_{\ell+3}$ when $\ell \geq 27$.
\end{proof}

Therefore,  the minimum polynomial degree of enriched polynomials is $\ell+4$ for
  the $P_{\ell}$ polynomial space when $\ell\geq 27$. The difference between the number of degrees of freedom of (\ref{degree1})--(\ref{degree4})
  and the dimension of $P_{\ell}(T)$ is
\a{ &\quad \ \frac{1}{6}
	(\ell^3+18\ell^2-\ell-18) - \frac 16 (\ell^3+6\ell^2+11\ell+6) \\
    & = (2 \ell^2 -2 \ell)- 4 \\
    & = 4 (\ell-1)(\ell-0)/2 - 4 \\ & = 4 \dim P_{\ell-2, 2D} - 4.  }
Thus, the $P_{\ell}$ polynomial space has to be enriched by $\dim P_{\ell-2} (F) - 1 $ high order polynomials
   on each face triangle.


\begin{mylem}
Let $F_m$ be a face triangle of $T$.  A function  $v\in b_{F_m}^2P_{\ell-2}(T)=
         \{\lambda_i^2\lambda_j^2\lambda_k^2q~|~q\in P_{\ell-2}(T) \}$,
    where the face triangle $F_m$, the $m$-th face ,is formed by three vertices $i$, $j$ and $k$,  and
   $\lambda_i$ is a linear function valued 1 at vertex $i$ and $0$ at the rest vertices,
is unisolvent by the following degrees of freedom,

\an{ \label{3dof}  (\mathrm{i})&\int_{F_m}v P_{\ell-3}(F_m)\mathrm{d}S,\\ ~(\mathrm{ii})&
 \int_{T}v P_{\ell-4}(T)\mathrm{d}\mathbf{x},\\~
(\mathrm{iii})&\int_{F_m}\partial_{\mathbf{\b n}_m}v  P_{\ell-2}(F_m)\mathrm{d}S.
}
When $\ell=3$, (ii) of \eqref{3dof} drops.

\end{mylem}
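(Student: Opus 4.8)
The plan is to establish unisolvency through a dimension count together with a kernel argument, extracting the face information before the interior information. Multiplication by the fixed nonzero polynomial $b_{F_m}^2=\lambda_i^2\lambda_j^2\lambda_k^2$ is injective, so $\dim(b_{F_m}^2P_{\ell-2}(T))=\dim P_{\ell-2}(T)=\binom{\ell+1}{3}$. The three groups of functionals in \eqref{3dof} number $\dim P_{\ell-3}(F_m)+\dim P_{\ell-4}(T)+\dim P_{\ell-2}(F_m)=\binom{\ell-1}{2}+\binom{\ell-1}{3}+\binom{\ell}{2}$, and two applications of Pascal's rule give $\binom{\ell-1}{2}+\binom{\ell-1}{3}+\binom{\ell}{2}=\binom{\ell}{3}+\binom{\ell}{2}=\binom{\ell+1}{3}$. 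Thus the number of degrees of freedom equals the dimension of the space, and it suffices to show that the kernel is trivial: if $v=b_{F_m}^2q$ with $q\in P_{\ell-2}(T)$ annihilates conditions (i)--(iii) of \eqref{3dof}, then $q\equiv0$.

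Let $\mu$ denote the barycentric coordinate of the vertex of $T$ not lying on $F_m$, so that $\mu=0$ on $F_m$ and $b_{F_m}\mu=b_T:=\lambda_i\lambda_j\lambda_k\mu$ is the full tetrahedral bubble. Expanding $q$ in powers of $\mu$ produces the direct-sum decomposition $q=q_0+\mu q_1+\mu^2\hat q$ with $q_0\in P_{\ell-2}(F_m)$, $q_1\in P_{\ell-3}(F_m)$, and $\hat q\in P_{\ell-4}(T)$; the dimension identity above is precisely the statement that this sum is direct. Writing $b:=b_{F_m}|_{F_m}$ for the triangle bubble of $F_m$ (positive in the interior of $F_m$), one gets $v|_{F_m}=b^2q_0$, and differentiating the three summands and using $\mu|_{F_m}=0$ gives $\partial_{\mathbf n}v|_{F_m}=\partial_{\mathbf n}(b_{F_m}^2q_0)|_{F_m}+c\,b^2q_1$ with $c=\partial_{\mathbf n}\mu$ a nonzero constant.

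The face functionals are then used to force $\mu^2\mid q$, i.e.\ $q_0=q_1=0$. Group (i) states that $\int_{F_m}b^2q_0\,r\,\mathrm dS=0$ for all $r\in P_{\ell-3}(F_m)$, which is exactly the assertion that $q_0$ lies in the $b^2$-weighted $L^2(F_m)$-orthogonal complement $W$ of $P_{\ell-3}(F_m)$ in $P_{\ell-2}(F_m)$, a space of dimension $\ell-1$. Testing group (iii) only against $r\in W$ annihilates the $q_1$-contribution, because $q_1\in P_{\ell-3}(F_m)$ is $b^2$-orthogonal to $W$, and leaves $\int_{F_m}\partial_{\mathbf n}(b_{F_m}^2q_0)\,r\,\mathrm dS=0$ for all $r\in W$; together with $q_0\in W$ this is a square homogeneous system for $q_0$. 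Showing this residual system has only the trivial solution is the main obstacle: the bilinear form $a(q_0,r)=\int_{F_m}\partial_{\mathbf n}(b_{F_m}^2q_0)\,r\,\mathrm dS$ on $W$ is not definite --- an integration by parts over $F_m$ (whose boundary term vanishes since $b=0$ on $\partial F_m$) turns $a(q_0,q_0)$ into an integral of $q_0^2$ against a sign-indefinite weight --- so the single test $r=q_0$ is not enough and one must use the entire family $r\in W$. I would prove injectivity of $q_0\mapsto a(q_0,\cdot)|_W$ by an affine-invariant leading-order argument on the reference tetrahedron. Once $q_0=0$, group (iii) collapses to $c\int_{F_m}b^2q_1\,r\,\mathrm dS=0$ for all $r\in P_{\ell-2}(F_m)$; taking $r=q_1$ and using $c\neq0$ with the positivity of $b^2$ gives $q_1=0$.

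With $q_0=q_1=0$ we have $q=\mu^2\hat q$ and therefore $v=b_T^2\hat q$. Group (ii) with the admissible test function $r=\hat q\in P_{\ell-4}(T)$ then yields $\int_Tb_T^2\hat q^2\,\mathrm d\mathbf x=0$, and since $b_T$ is positive on the open tetrahedron this forces $\hat q=0$, whence $q\equiv0$ and $v\equiv0$. When $\ell=3$ the space $P_{\ell-4}(T)$ is trivial so that group (ii) is vacuous, and the face step alone gives $q=q_0+\mu q_1=0$, in agreement with the remark that (ii) drops in that case.
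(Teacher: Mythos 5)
Your reduction is set up correctly as far as it goes: the dimension count matches, the splitting $q=q_0+\mu q_1+\mu^2\hat q$ is a valid direct sum, and the final steps recovering $q_1=0$ (once $q_0=0$) from group (iii) and $\hat q=0$ from group (ii) are sound. The genuine gap is exactly the step you yourself call the main obstacle: the injectivity of $q_0\mapsto a(q_0,\cdot)|_W$ on the $(\ell-1)$-dimensional space $W$, where $a(q_0,r)=\int_{F_m}\partial_{\mathbf{n}}(b_{F_m}^2q_0)\,r\,\mathrm{d}S$. You assert this \emph{would} follow from an ``affine-invariant leading-order argument on the reference tetrahedron,'' but no such argument is given, and this is not a peripheral detail --- it is the entire content of the lemma. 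Indeed your reduction is reversible: if $q_0\in W$ were a nonzero kernel element of the reduced map, then solving the two symmetric positive definite Gram systems $c\int_{F_m}b^2q_1r\,\mathrm{d}S=-\int_{F_m}\partial_{\mathbf{n}}(b_{F_m}^2q_0)\,r\,\mathrm{d}S$ for $r\in P_{\ell-3}(F_m)$, and $\int_T(b_{F_m}\mu)^2\hat q\,p\,\mathrm{d}\mathbf{x}=-\int_Tb_{F_m}^2(q_0+\mu q_1)\,p\,\mathrm{d}\mathbf{x}$ for $p\in P_{\ell-4}(T)$, would produce a nonzero $v=b_{F_m}^2(q_0+\mu q_1+\mu^2\hat q)$ annihilating all of (i)--(iii). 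So the unproved claim is \emph{equivalent} to the lemma itself; everything you actually proved is bookkeeping around it. Nor is it a soft fact: with the normal-constant extension of $q_0$ one gets $a(q_0,r)=2\int_{F_m}b\,(\partial_{\mathbf{n}}b_{F_m})\,q_0r\,\mathrm{d}S$, a weighted $L^2$ form whose weight $\partial_{\mathbf{n}}b_{F_m}|_{F_m}$ is a sign-indefinite quadratic depending on the shape of the tetrahedron, so no degree count or scaling argument by itself will give definiteness or injectivity on $W$.

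For comparison, the paper supplies precisely this missing ingredient by a different mechanism that never isolates a normal-derivative form on $W$. It first combines (i) and (iii) into the affine-invariant family $\int_{F_4}(\mathbf{r}\cdot\nabla\phi)\,q_{\ell-2}\,\mathrm{d}S=0$ for every constant vector $\mathbf{r}$ and every $q_{\ell-2}\in P_{\ell-2}(F_4)$: the normal component of $\mathbf{r}$ is handled by (iii), and the tangential component by integrating by parts on the face (the boundary term vanishes because $\phi q_{\ell-2}=0$ on $\partial F_4$) and then invoking (i), since $\mathbf{t}\cdot\nabla q_{\ell-2}\in P_{\ell-3}(F_4)$. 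It then makes the special choices $\mathbf{r}=c(\nabla\lambda_2\times\nabla\lambda_3)$ (and its two cyclic analogues) together with $q_{\ell-2}=\lambda_1q_{\ell-3}/2$, under which the identity collapses to $\int_{F_4}\frac{1}{2}\lambda_1^3\lambda_2^2\lambda_3^2(\nabla q\cdot\mathbf{r})\,q_{\ell-3}\,\mathrm{d}S=0$; this is a \emph{positive-weight} statement, so $(\nabla q\cdot\mathbf{r})|_{F_4}=0$ in three independent directions, hence $\nabla q|_{F_4}=0$, hence $q|_{F_4}$ is a constant, which is zero by (i), hence $q=\lambda_4^2\tilde q$ with $\tilde q\in P_{\ell-4}(T)$, and (ii) finishes. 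If you want to keep your framework, the cleanest repair is to import exactly this device --- replacing the normal-derivative moments by full gradient moments and choosing the cross-product directions --- to prove your injectivity claim; as it stands, your proposal is an outline whose central step is missing.
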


\begin{proof}
	Without loss of generality, we prove it on $F_4$.
Let $\phi=b^2_{F_4}q \in b^2_{F_4}P_{\ell-2}(T)$ and suppose it vanishes on all degrees of freedom of
  \eqref{3dof} (i),(ii) and (iii).
  The number of these degrees of freedom is
$$\mathrm{dim}P_{\ell-4}(T)+\mathrm{dim}P_{\ell-3}(F)+\mathrm{dim}P_{\ell-2}(F)=\mathrm{dim}P_{\ell-2}(T).$$
	Therefore, we only need to prove $\phi\equiv 0$.
 At first we want to prove if $\phi$ vanishes on all degrees of freedom then $\phi$ satisfies
	\begin{equation}
	\int_{F_4}(\mathbf{r}\cdot \nabla \phi) q_{\ell-2}\mathrm{d} S=0 \qquad\forall q_{\ell-2}\in
          P_{\ell-2}(F_4),
	\label{normaldecomp}
	\end{equation}
for all $\mathbf{r}\in \mathbb{R}^3$.
Let $\mathbf{t}$ be the tangential vector, the projection of $\mathbf r$
    on the face $F_4$,  $\mathbf{t}=\mathbf{r}-(\mathbf{r}\cdot \b n_4)\b n_4 $.
Here $\b n_4$ is the unit outer normal to the face $F_4$.
It follows from (\ref{3dof})(iii) that
	\begin{align*}
	\int_{F_4}(\mathbf{r}\cdot \nabla \phi) q_{\ell-2}\mathrm{d}S
  &=\int_{F_4}((\b t +(\mathbf{r}\cdot \b n_4)\b n_4)\cdot \nabla \phi) q_{\ell-2}\mathrm{d}S\\
	&=\int_{F_4}(\b t\cdot \nabla \phi) q_{\ell-2}\mathrm{d}S
	+\int_{F_4}((\mathbf{r}\cdot \b n_4)\b n_4 \cdot \nabla \phi) q_{\ell-2}\mathrm{d}S\\
    &=\int_{F_4}(\b t\cdot \nabla \phi) q_{\ell-2}\mathrm{d}S.
	\end{align*} 	
Noting $\phi=b_{F_4}^2 q$ that $ (\phi q_{\ell-2})|_{\partial F_4}=\b 0$, we have, by \eqref{3dof}(i),
\a{
	\int_{F_4}(\b t\cdot \nabla \phi) q_{\ell-2}\mathrm{d}S
    &=\int_{F_4} \partial_{\b t} (\phi  q_{\ell-2}) \mathrm{d}S
       -\int_{F_4}\phi(\b t\cdot \nabla q_{\ell-2})\mathrm{d}S \\
    &= -\int_{F_4}\phi(\b t\cdot \nabla q_{\ell-2})\mathrm{d}S=0,
	} since $\b t\cdot \nabla q_{\ell-2}\in P_{\ell-3}(F_4)$.
	Therefore, we have proved equation (\ref{normaldecomp}).
 Next, let $\mathbf{r}$ in \eqref{normaldecomp} be
\a{  \mathbf{r}= c(\nabla\lambda_2 \times \nabla\lambda_3),~
        c=1/(\nabla\lambda_2 \times \nabla\lambda_3)\cdot\nabla\lambda_1, }
  where the box product is nonzero, a scaled volume of the tetrahedron $T$.
	\a{
    \nabla\phi\cdot \mathbf{r}
	  &= (2 b_{F_4} q \nabla b_{F_4} + b_{F_4}^2 \nabla q) \cdot \b r \\
       &= 2 b_{F_4} q (\lambda_2\lambda_3 \nabla \lambda_1+\lambda_1\lambda_3 \nabla \lambda_2
           +\lambda_1\lambda_2 \nabla \lambda_3) \cdot \b r
            + b_{F_4}^2 \nabla q \cdot \b r \\
       &= 2 b_{F_4} q  \lambda_2\lambda_3
            + b_{F_4}^2 \nabla q \cdot \b r.  }
From (\ref{normaldecomp}) we have
	\begin{align*}
	0 & = \int_{F_4}(\mathbf{r}\cdot \nabla \phi)q_{\ell-2} \mathrm{d}S
	\\& =
    \int_{F_4} \phi \frac{2q_{\ell-2}}{\lambda_1} \mathrm{d}S +
	\int_{F_4}  b_{F_4}^2  (\nabla q \cdot \b r)  q_{\ell-2} \mathrm{d}S
	\\& =
    \int_{F_4} \phi  q_{\ell-3}  \mathrm{d}S +
	\int_{F_4} \frac{\lambda_1^3\lambda_2^2\lambda_3^2}2   (\nabla q \cdot \b r)  q_{\ell-3} \mathrm{d}S
     \\ &= \int_{F_4} \frac{\lambda_1^3\lambda_2^2\lambda_3^2}2
          (\nabla q \cdot \b r)  q_{\ell-3} \mathrm{d}S ,
	\end{align*}
where we choose $q_{\ell-2}=\lambda_1 q_{\ell-3}/2$ for an arbitrary $q_{\ell-3}\in P_{\ell-3}(F_4)$,
   and consequently the first term is zero due to \eqref{3dof}(i).
Since $ \frac{\lambda_1^3\lambda_2^2\lambda_3^2}2>0$ on $F_4$ except at $\partial F_4$,
     the $P_{\ell-3}(F_4)$ polynomial vanishes,
\an{ \label{1orth} (\nabla q \cdot \b r) |_{F_4} = 0.  }
Repeating above arguments with $\b r=c(\nabla\lambda_3 \times \nabla\lambda_1)$ and
    $\b r=c(\nabla\lambda_1 \times \nabla\lambda_2)$, we obtain \eqref{1orth} in the other
   two linearly independent directions and consequently
\an{ \nabla q |_{F_4} = \b 0,  }
which implies $\partial_{\b n_4} q |_{F_4} = 0$ and $\partial_{\b t} q |_{F_4} = 0$
   for any tangential vector $\b t$ on the face $F_4$.
Thus, $q|_{F_4}$ is a constant.  By \eqref{3dof}(i) this constant is $0$. Therefore, for $\ell=3$ we have $q=0$. 
If $\ell\geq 4$, we obtain
\a{ q=\lambda_4^2 q_0 \quad \hbox{ for some } \ q_0 \in P_{\ell-4}(T). }
By \eqref{3dof}(ii), $q_0=0$. Thus $q=0$.  The proof  is completed.
\end{proof}

The nodal basis of space $b_{F_4}^2 P_{\ell -2}(T)$, dual to the degrees of freedom of \eqref{3dof}(i)--(iii), is
\an{ \nonumber
    &\phi^{(1)}_{1,F_m}, \phi^{(1)}_{2,F_m}, \dots, \phi^{(1)}_{(\ell-2)(\ell-1)/2,F_m}, \\
     \nonumber
     &\phi^{(2)}_{1,F_m}, \phi^{(2)}_{2,F_m}, \dots, \phi^{(2)}_{(\ell-3)(\ell-2)(\ell-1)/6,F_m}, \\
    \label{norbase}
      &\phi_{1,m}, \phi_{2,m}, \dots, \phi_{d_0,m},\phi_{d_0+1,m}, }
where $d_0=(\ell-1)(\ell-0)/2-1$ and the last basis function satisfies
\an{\label{drop} \frac{1}{|F_m|}\int_{F_m}\partial_{\mathbf{\b n}_m}\phi_{d_0+1,m}\mathrm{d}S=1. }
Except the last basis function $\phi_{d_0+1,m}$, we enrich the $P_{\ell}$ polynomial space by the third group basis functions of \eqref{norbase} on the
  four faces:
\an{
	\mathcal{P}_\ell^+(T) = P_{\ell}(T)+\operatorname{span}_{1\le m\le 4} \{
                   \phi_{1,m},  \dots, \phi_{d_0,m} \}.
\label{Vhl} }
Noting that each $\phi_{i,m}(=b_{F_m}^2 q_{\ell-2})$ above
        vanishes on six edges of $T$ and
         it is a nodal basis function
   of $\mathcal{P}_{\ell}^+(T)$ associated to the fourth group degrees of freedom of \eqref{degree4}.
   
   Then the family of nonconforming finite element spaces is defined via the local space $\mathcal{P}_{\ell}^+(T)$.
   \begin{align}
   \label{Vh}
   V_{h,\ell}&=\Big\{v\in L^2(\Omega) \mid  v|_T\in \mathcal{P}^+_\ell(T),
   \int_e v p_{\ell-2}\text{d}s ~\text{is continuous at internal}\\
   \nonumber &\quad  \text{edges of }\mathcal{T}_h, \ \int_F vp_{\ell-3}\text{d}S ~
   \text{and}~ \int_F \partial_{\mathbf{n}}v p_{\ell-2}\mathrm{d} S
   \text{ are continuous }\\
   \nonumber  &\quad  \text{on internal face triangles of } \mathcal{T}_h,
   \quad \int_e v p_{\ell-2}\text{d} s=\int_F vp_{\ell-3}\text{d} S =\\
   \nonumber  &\quad \int_F \partial_{\mathbf{n}}v p_{\ell-2}\text{d}S =0,
   \text{ at boundary edges and on face triangles of }\mathcal{T}_h
   \Big\},
   \end{align}
   where $\mathcal{P}^+_\ell(T)$ is defined in \eqref{Vhl} and $p_{\ell}$ denotes a
   general $P_{\ell}$ polynomial.
\begin{myTheo}
The shape function space $\mathcal{P}^+_\ell(T)$
is unisolved by degrees of freedom of
(\ref{degree1})-(\ref{degree4}).
\label{unique}
\end{myTheo}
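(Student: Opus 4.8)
The plan is to establish unisolvency through a dimension count combined with injectivity of the nodal map. By the computation already carried out for Lemma \ref{dim}, the number of functionals in \eqref{degree1}--\eqref{degree4} equals $\frac16(\ell^3+18\ell^2-\ell-18)=\dim P_\ell(T)+4d_0$, with $d_0=(\ell-1)\ell/2-1$; this is exactly the number of generators of $\mathcal P_\ell^+(T)$ in \eqref{Vhl}, namely a basis of $P_\ell(T)$ together with the $4d_0$ enriching functions $\phi_{i,m}$. I will show that whenever a combination $v=p+\sum_{m=1}^4\sum_{i=1}^{d_0}c_{i,m}\phi_{i,m}$ with $p\in P_\ell(T)$ annihilates all of \eqref{degree1}--\eqref{degree4}, necessarily $p\equiv0$ and every $c_{i,m}=0$. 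Taking $v$ to be the zero function then yields linear independence of the generators, so $\dim\mathcal P_\ell^+(T)$ equals the number of functionals; the same conclusion for arbitrary such $v$ yields injectivity of the nodal map, and together these give the desired unisolvency.

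The structural fact driving the argument is that each generator $\phi_{i,m}=b_{F_m}^2q$ vanishes to second order on the three faces $F_{m'}$ with $m'\ne m$, since $b_{F_m}^2$ carries the factor $\lambda_j^2$ for every vertex $j$ of $F_m$; in particular $\phi_{i,m}$ and its gradient vanish on all six edges, and by construction $\phi_{i,m}$ annihilates the face moments \eqref{3dof}(i) on $F_m$ and the volume moments \eqref{3dof}(ii). Hence the generators contribute nothing to the functionals \eqref{degree1}, \eqref{degree2} and \eqref{degree3}, so the vanishing of these functionals on $v$ is equivalent to their vanishing on $p$ alone. Consequently $p$ has zero edge moments up to degree $\ell-2$, zero face moments up to degree $\ell-3$, and zero volume moments up to degree $\ell-4$.

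Next I would use the normal-derivative functionals \eqref{degree4}. Fix a face $F_m$. Since $\partial_{\mathbf n}\phi_{i,m'}$ vanishes on $F_m$ whenever $m'\ne m$, only $p$ and $\phi_{1,m},\dots,\phi_{d_0,m}$ contribute to \eqref{degree4} on $F_m$. Recalling that $\phi_{1,m},\dots,\phi_{d_0+1,m}$ are dual to the normal-derivative moments against a basis $p_1,\dots,p_{d_0+1}$ of $P_{\ell-2}(F_m)$, with $p_{d_0+1}\equiv1$ the function attached through \eqref{drop} to the discarded $\phi_{d_0+1,m}$, the $d_0$ equations tested against $p_1,\dots,p_{d_0}$ determine $c_{i,m}=-\int_{F_m}\partial_{\mathbf n}p\,p_i\,\mathrm dS$, while the one equation tested against $p_{d_0+1}$ contains no $c_{i,m}$ and collapses to a constraint on $p$ alone, $\int_{F_m}\partial_{\mathbf n}p\,\mathrm dS=0$. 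Running over the four faces yields four scalar constraints on $p$; once $p$ is shown to vanish, the formula for $c_{i,m}$ forces all $c_{i,m}=0$.

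The remaining step, which I expect to be the main obstacle, is to show that a polynomial $p\in P_\ell(T)$ with those three families of vanishing moments and with $\int_{F_m}\partial_{\mathbf n}p\,\mathrm dS=0$ on all four faces must vanish identically. The moment conditions alone fall four short of determining $p$: the edge, face and volume moments together with the four vertex evaluations form the classical unisolvent set for $P_\ell(T)$, so the polynomials annihilating all those moments constitute a four-dimensional space $W$, coordinatized by the vertex values and spanned by vertex functions $\psi_1,\dots,\psi_4$ with $\psi_a(v_b)=\delta_{ab}$, all higher moments zero, and $\psi_a\equiv0$ on the opposite face $F_a$. The claim is then that the four functionals $p\mapsto\int_{F_m}\partial_{\mathbf n}p\,\mathrm dS$ are linearly independent on $W$, i.e. that the $4\times4$ matrix $M_{ma}=\int_{F_m}\partial_{\mathbf n}\psi_a\,\mathrm dS$ is nonsingular. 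I would settle this by constructing the $\psi_a$ explicitly and analyzing $\det M$, using the vanishing of $\psi_a$ on $F_a$ to expose the structure of $M$; establishing $\det M\ne0$ uniformly in $\ell$ is the genuinely computational and delicate part, whereas everything preceding it is bookkeeping forced by the bubble structure and the duality built into the nodal basis \eqref{norbase}.
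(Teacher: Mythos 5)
Your reduction is sound and runs parallel to the skeleton of the paper's own argument: the enriching bubbles $\phi_{i,m}$ annihilate \eqref{degree1}--\eqref{degree3}, their gradients vanish identically on the faces $F_{m'}$, $m'\neq m$, and the duality built into \eqref{norbase} (with the constant moment discarded via \eqref{drop}) lets you solve for the coefficients $c_{i,m}$ and extract the four scalar constraints $\int_{F_m}\partial_{\mathbf{n}}p\,\mathrm{d}S=0$. So everything correctly comes down to showing that these four averaged normal-derivative functionals are linearly independent on the four-dimensional kernel $W$ of the moment functionals \eqref{degree1}--\eqref{degree3} in $P_\ell(T)$. The problem is that you stop exactly there: you assert that the matrix $M_{ma}=\int_{F_m}\partial_{\mathbf{n}}\psi_a\,\mathrm{d}S$ is nonsingular and defer it as ``the genuinely computational and delicate part.'' That nondegeneracy is not a routine verification to be postponed; it is the mathematical heart of the theorem, and no step you carried out earlier bears on it. As it stands, the proof has a genuine gap, and the route you propose for closing it (explicit construction of the vertex-value basis $\psi_a$ and a determinant analysis uniform in $\ell$) is harder than what the paper does, because in that basis $M$ is a full matrix with no evident structure.

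The paper closes precisely this gap by choosing a different basis of $W$, namely the univariate functions \eqref{se}, $\phi_{d_1+m,P_\ell}=\lambda_m(a_1+a_2\lambda_m+\cdots+a_\ell\lambda_m^{\ell-1})$ with $a_1\neq 0$, which vanish on \eqref{degree1}--\eqref{degree3} by construction. For this basis $M$ is \emph{diagonal}: since $\nabla\phi_{d_1+m,P_\ell}$ is parallel to $\nabla\lambda_m$, hence to $\mathbf{n}_m$, on any other face $F_{m'}$ one decomposes $\mathbf{n}_{m'}$ into a multiple of a cross product of the form $\mathbf{n}\times\mathbf{n}_m$ plus a tangential vector; the cross-product part pairs to zero with a gradient parallel to $\mathbf{n}_m$, and the tangential part integrates by parts along $F_{m'}$ into edge integrals of $\phi_{d_1+m,P_\ell}$, which vanish by the zero edge moments \eqref{degree1}. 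The diagonal entries are nonzero multiples of $a_1\neq0$, so the four functionals of \eqref{constdof} are independent on $W$, which is exactly your missing claim. If you want to finish your argument along your own lines, the most economical repair is to abandon the vertex basis $\psi_a$ in favor of this univariate basis and reproduce that computation; alternatively you would need some other device (e.g. a symmetry reduction to the regular tetrahedron followed by an explicit evaluation of the two distinct entries of $M$ as functions of $\ell$), but some such argument must actually be supplied.
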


\begin{proof}
First,  (\ref{degree1})-(\ref{degree3}) and the four vertex valuations $\{ v(\b x_m) \}$    form a dual
  basis for $P_{\ell}(T)$.
This can be verified as follows.  If all the degrees of freedom of $v\in \mathcal{P}_\ell(T)$ vanish,
  then (1) $v$ vanishes on each edge because it vanishes at two end points of the edge and its moments of order $\leq\ell-2$ on the edge
  also vanishes;
  (2) $v$ vanishes on each face triangle because it vanishes on the 3 edge of the face triangle and its moments of order $\leq \ell-3$
  on the face triangle vanishes too;
  (3) $v$ vanishes on the tetrahedron $T$ because it vanishes on the four face triangles of $T$ and its moments of
   order $\ell-4$ on $T$ also vanishes.

The corresponding basis functions of $P_{\ell}(T)$, dual to (\ref{degree1})-(\ref{degree3})
    and the following four degrees of freedom 
\begin{equation}
	\frac{1}{|F_m|}\int_{F_m}\partial_{\mathbf{n}_m}v\mathrm{d}S,~m=1,\cdots,4
	\label{constdof}
\end{equation}  
    
    are
\an{ \label{Pl}
   \phi_{1,P_{\ell}},\ ...,\ \phi_{d_1,P_{\ell}},
      \phi_{d_1+1,P_{\ell}},\ \phi_{d_1+2,P_{\ell}},\
   \phi_{d_1+3,P_{\ell}}, \ \phi_{d_1+4,P_{\ell}}, }
where $d_1=\dim P_{\ell}(T) - 4$. Note that the last four basis functions 
vanish for \eqref{degree1}-\eqref{degree3} and satisfy the following orthogonal property with respect to the degrees of freedom 
of \eqref{constdof}, i.e.,
\a{ \int_{F_m} \partial_{\b n_m} \phi_{d_1+i,P_{\ell}} \r dS
      =\begin{cases} 0 & \hbox{if } i \ne m, \\
              \hbox{non-zero}  & \hbox{if } i = m, \end{cases} \quad i,m=1,2,3,4. }
 In fact we have the following expression for such a function, independent of $\lambda_1, \lambda_2$ and
                  $\lambda_3$,
\an{\label{se}
   \phi_{d_1+4, P_\ell} = \lambda_4 ( a_1 + a_2 \lambda_4+ \dots + a_{\ell} \lambda_4^{\ell-1}),
  }
where $a_1 \ne 0$, $a_1+a_2+\dots+a_{\ell}=1$.  This can be verified by applying degrees of freedom of
    (\ref{degree1})-(\ref{degree3}) to it.
\a{   \frac{1}{|F_m|}\int_{F_4} \partial_{\b n_4} \phi_{d_1+4,P_{\ell}} \r dS
     = -|\nabla \lambda_4|a_1 |F_4| \ne 0, }

  where $|F_4|$ is the area of the $F_4$.
But on any other face triangle, say $F_1$, we have
\a{   \int_{F_1}  \partial_{\b n_1} \phi_{d_1+4,P_{\ell}}\r  dS
    & = \int_{F_1}  {\b n_1}\cdot \nabla \phi_{d_1+4,P_{\ell}} \r dS \\
    & = \int_{F_1}  \Big( \frac{\b n_2\times \b n_4} {(\b n_2\times \b n_4)\cdot \b n_1}
        - \frac {c_1 \b t_1} {(\b n_2\times \b n_4)\cdot \b n_1} \Big)
               \cdot \nabla \phi_{d_1+4,P_{\ell}}\r  dS \\
     & = \int_{F_1}   \frac{(\b n_2\times \b n_4)\cdot \nabla \phi_{d_1+4,P_{\ell}} }
       {(\b n_2\times \b n_4)\cdot \b n_1}\r  dS \\
  & = \int_{F_1}   \frac{ \b n_2  \cdot (\b n_4 \times \nabla \phi_{d_1+4,P_{\ell}})  }
       {(\b n_2\times \b n_4)\cdot \b n_1}\r  dS \\
  & = \int_{F_1}   \frac{ \b n_2  \cdot \b 0  }
       {(\b n_2\times \b n_4)\cdot \b n_1}\r  dS = 0,
} where $\b t_1 $ is a tangent vector on $F_1$ such that $\b n_2\times \b n_4= c_0\b n_1 + c_1 \b t_1
    = [(\b n_2\times \b n_4)\cdot \b n_1]\b n_1 + c_1 \b t_1,$    $(\b n_2\times \b n_4)\cdot \b n_1=6|T|\ne 0$,
    $\int_{F_1} \b t_1 \cdot \nabla \phi_{d_1+4,P_{\ell}}\r  dS=
     \int_{\partial F_1} (\b t_1 \cdot \b t_{\partial F_1})
            \phi_{d_1+4,P_{\ell}}\r  dS =0$ by \eqref{degree1},
 and 
 $\mathbf{n}_4$ is parallel to $\nabla \phi_{d_1+4,P_{\ell}} $
 everywhere by \eqref{se}.

We now show that these functions from \eqref{Pl} and \eqref{norbase} form a (not dual to the degrees of freedom of \eqref{degree1} to \eqref{degree4}) basis of
  $\mathcal{P}^+_{\ell}(T)$.
We only need to show they are linearly independent. Assume that
\a{ u = \sum_{i=1}^{d_1} a_i \phi_{i, P_\ell} +
       \sum_{i=1}^{4} b_i \phi_{d_1+i, P_\ell} +
      \sum_{m=1}^{4} \sum_{i=1}^{d_0}  c_{m,i} \phi_{ i,m} =0 }
for parameters $a_i,b_i$ and $c_{m,i}.$
Sequentially,
\a{ \ad{&\hbox{applying degrees of freedom of \eqref{degree1}--\eqref{degree3} to $u$} &&\Rightarrow a_{i}=0, \\
    &\hbox{applying four degrees of freedom of \eqref{constdof} to $u$} &&\Rightarrow b_i=0, \\
    &\hbox{applying the rest degrees of freedom of \eqref{degree4}  to $u$} &&\Rightarrow c_{m,i} =0. } }
This completes the proof.
\end{proof}
   Finally we present a convergence theorem of the family of finite elements.
\begin{myTheo}
The equation (\ref{dproblem}) has a unique solution $u_h \in V_{h,\ell}$. Moreover, the $H^2$ semi-norm error estimate with $u\in H^{\ell+1}(\Omega)\cap H^2_0(\Omega)$ is given by,
$$
\|D_h^2(u-u_h)\|_0\leq Ch^{\ell-1}|u|_{\ell+1}.
$$
\label{crate}
\end{myTheo}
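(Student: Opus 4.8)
The plan is to deduce both claims from the abstract estimate of Theorem~\ref{Hu}. That theorem carries two standing assumptions: that the space $V_{h,\ell}$ satisfies Hypotheses~\ref{hyp1} and~\ref{hyp2}, and that $\|D^2_h\cdot\|_0$ defines a norm on $V_{h,\ell}$. So I would first discharge these two assumptions (the norm property also yielding existence and uniqueness of $u_h$), and then bound each of the three terms on the right-hand side of Theorem~\ref{Hu} by $Ch^{\ell-1}|u|_{\ell+1}$.

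To verify Hypothesis~\ref{hyp1}, I would split the gradient jump across an internal face $F$ into its normal and tangential parts. The normal component, tested against $P_{\ell-2}(F)$, vanishes directly because the moment $\int_F \partial_{\mathbf n} v_h\,p_{\ell-2}\,\mathrm dS$ in \eqref{degree4} is continuous across $F$ by the definition \eqref{Vh}. For a tangential vector $\mathbf t$ and $q\in P_{\ell-2}(F)$, an integration by parts on $F$ gives
\[
\int_F (\mathbf t\cdot\nabla_h v_h)\,q\,\mathrm dS
   = \int_{\partial F} v_h q\,(\mathbf t\cdot\mathbf n_{\partial F})\,\mathrm ds
     - \int_F v_h\,(\mathbf t\cdot\nabla q)\,\mathrm dS ,
\]
so the jump of the tangential term is controlled by the jumps of the edge moments $\int_e v_h p_{\ell-2}\,\mathrm ds$ (boundary term, since $q|_e\in P_{\ell-2}(e)$) and of the face moments $\int_F v_h p_{\ell-3}\,\mathrm dS$ (interior term, since $\mathbf t\cdot\nabla q\in P_{\ell-3}(F)$); both are continuous by \eqref{Vh}. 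The boundary-face version \eqref{H12} follows identically. Hypothesis~\ref{hyp2} is immediate, being exactly the continuity/vanishing of $\int_F v_h p_{\ell-3}\,\mathrm dS$ encoded in \eqref{Vh}.

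For the norm property I would note that $\|D^2_h v_h\|_0=0$ forces $v_h|_T$ to be affine on each $T$. Then $[\nabla_h v_h]$ is constant on every internal face, and since the now-established Hypothesis~\ref{hyp1} makes it orthogonal to the constants contained in $P_{\ell-2}(F)$ for $\ell\ge3$, the jump vanishes. Hence $\nabla_h v_h$ is globally continuous and piecewise constant, so constant on the connected domain, whence $v_h$ is a single affine function; the boundary conditions in \eqref{Vh} then force $v_h\equiv0$. With $\|D^2_h\cdot\|_0$ a norm, the form $(D^2_h\cdot,D^2_h\cdot)_h$ is coercive and bounded on the finite-dimensional $V_{h,\ell}$, so Lax--Milgram gives the unique solvability of \eqref{dproblem}. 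For the approximation term I would use the canonical interpolation operator $I_h\colon H^{\ell+1}\cap H^2_0\to V_{h,\ell}$ defined by the degrees of freedom \eqref{degree1}--\eqref{degree4}, well defined by the unisolvency of Theorem~\ref{unique}; since $P_\ell(T)\subset\mathcal P^+_\ell(T)$ the local interpolation preserves $P_\ell$, and a scaling/Bramble--Hilbert argument on the regular grid yields $\|D^2_h(u-I_hu)\|_0\le Ch^{\ell-1}|u|_{\ell+1}$, bounding $\inf_{s_h}\|D^2_h(u-s_h)\|_0$. The first consistency term is handled by the $L^2$-projection error: $u\in H^{\ell+1}$ gives $D^2u\in H^{\ell-1}$, so $\|(I-\Pi_{\ell,\omega_F})D^2u\|_{0,\omega_F}\le Ch^{\ell-1}|D^2u|_{\ell-1,\omega_F}$, and summation gives $Ch^{\ell-1}|u|_{\ell+1}$. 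For the second, $f=\Delta^2u\in H^{\ell-3}$ gives $\|(I-\Pi_{\ell-1,T})f\|_{0,T}\le Ch^{\ell-3}|f|_{\ell-3,T}$, and the prefactor $h^4$ contributes $h^2$, for a total $h^2\cdot h^{\ell-3}=h^{\ell-1}$ after summation. Adding the three bounds completes the estimate.

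I expect the main obstacle to be the tangential part of Hypothesis~\ref{hyp1} together with the norm property: both require matching the moment degrees of freedom to the integration-by-parts identities and checking that the polynomial degrees align (namely $\mathbf t\cdot\nabla q$ dropping from $P_{\ell-2}(F)$ into $P_{\ell-3}(F)$, and constants lying in $P_{\ell-2}(F)$ precisely because $\ell\ge3$). By contrast, the three approximation estimates are routine once the interpolation operator furnished by Theorem~\ref{unique} is in hand.
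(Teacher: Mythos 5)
Your overall route is the same as the paper's: establish that $V_{h,\ell}$ satisfies Hypotheses \ref{hyp1} and \ref{hyp2} and that $\|D^2_h\cdot\|_0$ is a norm (hence unique solvability of the square linear system \eqref{dproblem}), then feed Theorem \ref{Hu} with the canonical interpolant and two $L^2$-projection bounds. The paper's own proof is far terser: it runs the $f=0$ uniqueness argument taking the jump conditions as already encoded in \eqref{Vh}, and then simply cites Theorem \ref{Hu} and ``standard interpolation theory.'' Your explicit verification of Hypothesis \ref{hyp1} (normal part by single-valuedness of the moments in \eqref{degree4}; tangential part by integration by parts on $F$, with the boundary term controlled by the edge moments since $q|_e\in P_{\ell-2}(e)$ and the interior term by the face moments since $\mathbf{t}\cdot\nabla q\in P_{\ell-3}(F)$) and your three approximation bounds, including the bookkeeping $h^2\cdot h^{\ell-3}=h^{\ell-1}$ for the $f$-term, are correct and fill in what the paper leaves implicit.

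There is, however, one genuine slip in your uniqueness argument. From ``$\nabla_h v_h$ is a global constant'' you conclude ``whence $v_h$ is a single affine function''; this does not follow. At that stage you only know $v_h|_T=c\cdot x+d_T$ with element-dependent constants $d_T$, and nothing used so far prevents the $d_T$ from differing across faces. You must invoke Hypothesis \ref{hyp2}: the jump $[v_h]=d_{T_1}-d_{T_2}$ is constant on each internal face and is orthogonal to $P_{\ell-3}(F)\ni 1$, which forces all the $d_T$ to coincide; this is where $\ell\ge 3$ genuinely enters (your closing remark misplaces it: constants lie in $P_{\ell-2}(F)$ already for $\ell\ge 2$; it is $P_{\ell-3}(F)$ that needs $\ell\ge 3$). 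This is exactly the step the paper makes explicit (``combination with the jump condition \eqref{hyp2}, we get $u_h$ is a global constant''), the only cosmetic difference being order: the paper first uses the normal-derivative boundary condition to get $\nabla_h u_h=\mathbf{0}$ and then glues the piecewise constants, while you glue first and use the boundary conditions last. With that one step inserted, your argument is complete.
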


\begin{proof} Let $u_h$ be a solution to \eqref{dproblem} with $f=0$ there.
  On each tetrahedron, $D^2 u_h=0$ which implies that $\nabla_h u_h$ is a piecewise constant vector there.
The jump condition \eqref{hyp1} indicates that $\nabla_h u_h$ is a global constant vector.
 By the normal derivative boundary condition, we have $\nabla_h u_h=\b 0$. Thus,
$u_h$ is a piecewise constant on each tetrahedron.
 Combination with the jump condition \eqref{hyp2}, we get $ u_h$ is a global constant.
 Then, the function value boundary condition implies $u_h=0$.
  Thus, the square linear system of equations  \eqref{dproblem} has a unique solution.

By \eqref{degree1}--\eqref{degree4},  Theorem \ref{Hu}, and the standard interpolation theory,
  the theorem is proved.

\end{proof}

\section{The lower order situation}
 According to Lemma \ref{dim}, $P_{\ell+4}$ polynomials  are needed for enrichment when $\ell \geq 27$.
 However, when $\ell$ is small, the enriched $P_{\ell}$ polynomial space, defined in \eqref{norbase} , can be improved by using lower degree polynomials compared with $P_{\ell+4}$ polynomials. In this section, we mainly focus on finding the optimal degree of such a enriched $P_{\ell}$ function space for $\ell=4,5$. 
 
 For convenience, we use $\mathbb{N}^{(\ell)}_{m,l},m=1,\cdots,4$ to represent the $l$-th normal derivative degree of freedom on the $m$-th face, i.e.,
 $$
 \mathbb{N}^{(\ell)}_{m,l}(v)=\frac{1}{|F_m|}\int_{F_m}\partial_{\mathbf{n}}(v)\lambda_i^{l_1}\lambda_j^{l_2}\lambda_k^{l_3}\mathrm{d}s,
 ~l=1,\cdots,\mathrm{dim}(P_{\ell-2}(F_m)),
 $$
 where $l_1+l_2+l_3=\ell-2$ and the subscript
  $l$ of $\mathbb{N}^{(\ell)}_{m,l}$ represents that $\lambda_i^{l_1}\lambda_j^{l_2}\lambda_k^{l_3}$ is the $l$-th basis function on $P_{\ell-2}(F_m)$. Here, we suppose that the face $F_m$ consists of vertices $i,j,k.$  Similarly, we can define $\mathbb{E}^{(\ell)}_{m,l},\mathbb{F}^{(\ell)}_{m,l},\mathbb{T}^{(\ell)}_{l}$ as
\begin{align*}
 \mathbb{E}^{(\ell)}_{m,l}(v)&=\frac{1}{|e_m|}\int_{e_m}\partial_{\mathbf{n}}(v)\lambda_i^{l_1}\lambda_j^{l_2}\mathrm{d}s,~m=1,\cdots,6,~l=1,\cdots,\mathrm{dim}(P_{\ell-2}(e_m)),~l_1+l_2=\ell-2;\\
  \mathbb{F}^{(\ell)}_{m,l}(v)&=\frac{1}{|F_m|}\int_{F_m}\partial_{\mathbf{n}}(v)\lambda_i^{l_1}\lambda_j^{l_2}\lambda_k^{l_3}\mathrm{d}S,~
  m=1,\cdots,4,~l=1,\cdots,\mathrm{dim}(P_{\ell-3}(F_m)),~l_1+l_2+l_3=\ell-3;\\
   \mathbb{T}^{(\ell)}_{l}(v)&=\frac{1}{|T|}\int_{T_m}\partial_{\mathbf{n}}(v)\lambda_1^{l_1}\lambda_2^{l_2}\lambda_3^{l_3}\lambda_{4}^{l_4}\mathrm{d}\mathbf{x},~l=1,\cdots,\mathrm{dim}(P_{\ell-4}(T)),~l_1+l_2+l_3+l_4=\ell-4.
\end{align*}
\subsection{Enriched $P_4(T)$ element in 3D}  When $\ell=4$, the degrees of freedom of \eqref{degree1}--\eqref{degree4} can be equivalently 
  rewritten as follows, respectively.
\begin{equation}
\mathbb{E}^{(4)}_{m,l}(v)
    =\frac{1}{|e_m|}\int_{e_m}v\lambda_i^{l_1}\lambda_j^{l_2}\mathrm{d}s,
    m=1,\cdots,6;  \label{degree11}
\end{equation}
here $(l_1,l_2)$ is $(2,0),(1,1),(0,2)$ for $l=1,2,3$, respectively. And in \eqref{degree11}, $\lambda_i$ and $\lambda_j$ are two barycentric coordinates of $e_m;$
\begin{equation}
\mathbb{F}^{(4)}_{m,l}(v)
  =\frac{1}{|F_m|}\int_{F_m}v\lambda_i^{l_1}\lambda_j^{l_2}\lambda_k^{l_3}\mathrm{d}S,\
   m=1,\cdots,4,
    \label{degree12}
    \end{equation}
where $(l_1,l_2,l_3)$ is $(1,0,0),(0,1,0),(0,0,1)$ for $l=1,2,3,$ respectively. And in \eqref{degree12} $\lambda_i,\lambda_j$ and $\lambda_k$ are three barycentric coordinates of $F_m;$
    \begin{equation}
  \mathbb{T}_l^{(4)}(v)=\frac{1}{|T|}\int_Tv \mathrm{d}\mathbf{x},
       \label{degree13}~l=1;\\
\end{equation}
\begin{equation}
\mathbb{N}^{(4)}_{m,l}(v)
    =\frac{1}{|F_m|}
\int_{F_m}\partial_{\mathbf{n}}v\lambda_i^{l_1}\lambda_j^{l_2}\lambda_k^{l_3}\mathrm{d}S ,~m=1,\cdots,4,
\label{degree14}
\end{equation}
where $(l_1,l_2,l_3)$ is $$(2,0,0),(0,2,0),(0,0,2),(0,1,1),(1,0,1),(1,1,0)$$ for $l=1,\cdots,6$, respectively.

The number of degrees of freedom of (\ref{degree11}) to (\ref{degree14})
is 55 which is less than dim$P_5(T)$=56. However, there are no enough linearly independent polynomials in $P_5(T)$ with respect to \eqref{degree11}
to \eqref{degree14}. In fact there are four
     nonzero functions in $P_5(T)$ which vanish for all the degrees of freedom of \eqref{degree11} to \eqref{degree14}.
To see it, define
\begin{equation}
   b_{i} = \lambda_i^2(\lambda_i^3-\frac{15}{8}\lambda_i^2
           +\frac{15}{14}\lambda_i-\frac{5}{28}),~i=1,2,3,4.
\end{equation}
Note that $b_i$ vanishes for all the degrees of freedom of (\ref{degree11})--(\ref{degree14}).  Hence, the lowest polynomial degree for
    the enrichment is six.
    
Next we present 24 functions associated to the degrees of freedom of (\ref{degree14}). The 24 basis functions consist of four $P_4$ functions which are defined in (\ref{se}) and twenty $P_6$ functions which vanish for the degrees of freedom of (\ref{degree11}) to (\ref{degree13}).  \\
Firstly, let $\ell=4$ in (\ref{se}) we get four functions as 
\begin{equation}
   \tilde{\phi}^{(N,4)}_{m,0}=\frac{1}{4}(35\lambda_m^4 - 60\lambda_m^3 + 30\lambda_m^2 -4\lambda_m),
      ~ m=1,\cdots,4 .
      \label{basen1}
\end{equation}
Next, we define the following twelve $P_6$ functions
\begin{equation}
\tilde{\phi}^{(N,4)}_{m,t}=\frac{1}{2}\lambda_t^2\lambda_m(42\lambda_m^3-56\lambda_m^2+21\lambda_m-2),1\leq m\neq t\leq 4.
\label{basen2}
\end{equation}
Lastly, the rest eight $P_6$ functions are defined in \eqref{basen3} as follows. 
\begin{equation}
\tilde{\phi}^{(N,4)}_{m,4+t}=\frac{1}{2}b_{F_t}(42\lambda_m^3-56\lambda_m^2+21\lambda_m-2),1\leq m\neq t\leq 4.
\label{basen3}
\end{equation}
Recall that $b_{F_t}$ is the cubic face bubble function with respect to face $F_t$.
In fact, there are twelve functions in \eqref{basen3}  but we only need eight of them. Note that the 28 functions in \eqref{basen1} to \eqref{basen3} vanish for the degrees of freedom of \eqref{degree11} to \eqref{degree13} and satisfy
\begin{equation}
    r_n\mathbb{N}^{(4)}_{n,l}(\tilde{\phi}^{(N,4)}_{m,0})=
  \frac{\delta_{m,n}2!l_1!l_2!l_3!}{(l_1+l_2+l_3+2)!},
             ~ 1\leq n,m\leq 4,~l=1,\cdots,6,
\label{m1}
\end{equation}
where $r_n$ is the distance between vertex $n$ and the face triangle $F_n$; 
\begin{equation}
r_n\mathbb{N}^{(4)}_{n,l}(\tilde{\phi}^{(N,4)}_{m,t})=
\frac{\delta_{m,n}2!(l_1+2\delta_{i,t})!(l_2+2\delta_{j,t})!(l_3+2\delta_{k,t})!}{(l_1+l_2+l_3+2)!},
\label{m2}
\end{equation}
where $1\leq m,n\leq 4~,1\leq t\neq m\leq 4,l=1,\cdots,6$ and $i,j,k$ are the index of three vertices on $F_m$;
\begin{equation}
r_n\mathbb{N}^{(4)}_{n,l}(\tilde{\phi}^{(N,4)}_{m,4+t})=
\frac{\delta_{m,n}2!(l_1+1-\delta_{i,t})!(l_2+1-\delta_{j,t})!(l_3+1-\delta_{k,t})!}{(l_1+l_2+l_3+2)!},
\label{m3}
\end{equation}
where $1\leq m,n\leq 4,~1\leq t\neq m\leq 4,~l=1,\cdots,6.$
Thus, we define the following shape function space:
\begin{equation}
	\tilde{\mathcal{P}}^+_4(T) = P_4(T)+\tilde{B}_4(T),
	\label{Vhl3}
\end{equation}
where
\begin{equation}
\tilde{B}_4(T)=\mathrm{span}\{\tilde{\phi}^{(N,4)}_{m,t},\tilde{\phi}^{(N,4)}_{1,7},\tilde{\phi}^{(N,4)}_{1,8},
    \tilde{\phi}^{(N,4)}_{2,8},\tilde{\phi}^{(N,4)}_{2,5},\tilde{\phi}^{(N,4)}_{3,5},\tilde{\phi}^{(N,4)}_{3,6},
     \tilde{\phi}^{(N,4)}_{4,6},\tilde{\phi}^{(N,4)}_{4,7} \}
\label{choose1}
\end{equation}
with $ 1\leq m\neq t\leq 4$. Note that the multi index $(m,t)$ of $\tilde{\phi}^{(4)}_{m,4+t}$ are chosen as $$(1,3),(1,4),(2,4),(2,1),(3,1),(3,2),(4,2),(4,3).$$
The 20 functions of $\tilde{B}_4(T)$ are linearly independent, which will be shown
in Theorem \ref{unisolve2} below.
Then the global finite element space is defined by
\begin{align*}
V_{h,4}=&\{v\in L^2{\Omega}|~v|_T\in \tilde{\mathcal{P}}^+_4(T),\\
&\int_e vp_2 \r ds~\text{is continuous at internal edges of }\mathcal{T}_h,\\
&\int_F vp_1\r dS ~\text{and}~\int_F \partial_{\mathbf{n}}vp_2 \r dS
\text{ are continuous on internal face triangles of } \mathcal{T}_h\\
&\int_e vp_2 \r ds=\int_F vp_1 \r dS =\int_F \partial_{\mathbf{n}}vp_2
\r dS=0\\
&\text{at boundary edges and on boundary face triangles of }\mathcal{T}_h
\}.
\end{align*}
Next, we construct the rest 31 $P_4$ functions which
 do not vanish for normal derivative moments of \eqref{degree14}. 

Firstly we define the following 18 $P_4$ function as
\begin{equation}
   \ad{ \tilde{\phi}^{(E,4)}_{m,1} & =60b_{e_m}(7\lambda_i^2-6\lambda_i+1), \\
         \tilde{\phi}^{(E,4)}_{m,2} &=60b_{e_m}(21\lambda_i\lambda_j-6\lambda_i-6\lambda_j+2), \\
       \tilde{\phi}^{(E,4)}_{m,3} &=60b_{e_m}(7\lambda_j^2-6\lambda_j+1), }
      \label{basef2}
\end{equation}
where  $\lambda_i$ and $\lambda_j(i<j)$ are two barycentric coordinates of edge $e_m$, and $b_{e_m}=\lambda_i\lambda_j,m=1,\cdots,6$. We also need the following twelve $P_4$ functions:
\begin{equation}
\begin{array}{cc}
   \tilde{\phi}^{(F,4)}_{m,1}&=180b_{F_m}(7\lambda_{i}-2),\\
   \tilde{\phi}^{(F,4)}_{m,2}&=180b_{F_m}(7\lambda_{j}-2),\\
   \tilde{\phi}^{(F,4)}_{m,3}&=180b_{F_m}(7\lambda_{k}-2),
\label{basef3}
\end{array}
\end{equation}
where $\lambda_i,\lambda_j,\lambda_k$ are three barycentric coordinates of triangle face $F_m$, and  $b_{F_m}=\lambda_i\lambda_j\lambda_k,~m=1,\cdots,4.$
The last one $P_4$ function that we need is
\begin{equation}
\tilde{\phi}^{(T,4)}_{1} = 840\lambda_1\lambda_2\lambda_3\lambda_4.
\label{baset}
\end{equation}
The 31 $P_4$ functions, defined in \eqref{basef2},\eqref{basef3} and \eqref{baset}, satisfy
    \begin{equation}
    \mathbb{E}^{(4)}_{n,l}(\tilde{\phi}^{(E,4)}_{m,t})	=\delta_{n,m}\delta_{l,t},~\mathbb{F}^{(4)}_{r,l}(\tilde{\phi}^{(E,4)}_{m,t}) = 0,~\mathbb{T}^{(4)}_{1}(\tilde{\phi}^{(E,4)}_{m,t})=0,
  \label{basef41}
  \end{equation}
  where $1\leq l,t\leq 3,~1\leq r\leq 4,~1\leq m,n\leq 6;$
  \begin{equation}
    \mathbb{E}^{(4)}_{n,l}(\tilde{\phi}^{(F,4)}_{m,t})	=0,~\mathbb{F}^{(4)}_{r,l}(\tilde{\phi}^{(F,4)}_{m,t}) = \delta_{r,m}\delta_{l,t},~\mathbb{T}^{(4)}_{1}(\tilde{\phi}^{(F,4)}_{m,t})=0,
  \label{basef42}
  \end{equation}
  where $1\leq l,t\leq 3,~1\leq m,r\leq 4,~1\leq n\leq 6;$
   \begin{equation}
    \mathbb{E}^{(4)}_{n,l}(\tilde{\phi}^{(T,4)}_{1})	=0,~\mathbb{F}^{(4)}_{r,l}(\tilde{\phi}^{(T,4)}_{1}) = 0,~\mathbb{T}^{(4)}_{1}(\tilde{\phi}^{(T,4)}_{1})=1,
  \label{basef43}
\end{equation}
where $1\leq l\leq 3,~1\leq r\leq 4,~1\leq n\leq 6.$
\begin{myTheo} $\tilde{\mathcal{P}}^+_4(T)$ is unisolvent for the
                 degrees of freedom of (\ref{degree11}) to (\ref{degree14}).
\label{unisolve2}
\end{myTheo}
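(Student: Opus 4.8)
The plan is to reduce unisolvence to the invertibility of a single $55\times55$ matrix that has a favorable block structure, and then to the invertibility of four small diagonal blocks. Counting the degrees of freedom in \eqref{degree11}--\eqref{degree14} gives $18+12+1+24=55$, while $\dim\tilde{\mathcal{P}}^+_4(T)\le\dim P_4(T)+\dim\tilde B_4(T)=35+20=55$. Hence it suffices to exhibit a spanning family of $\tilde{\mathcal{P}}^+_4(T)$ of cardinality $55$ on which the degree-of-freedom map is represented by an invertible matrix: invertibility then forces the $55$ functions to be linearly independent (so $\dim\tilde{\mathcal{P}}^+_4(T)=55$ and the family is a basis) and the map to be bijective, which is exactly unisolvence.

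First I would take as spanning family the $31$ functions $\tilde{\phi}^{(E,4)}_{m,t},\tilde{\phi}^{(F,4)}_{m,t},\tilde{\phi}^{(T,4)}_{1}$ of \eqref{basef2}--\eqref{baset}, together with the four $\tilde{\phi}^{(N,4)}_{m,0}$ of \eqref{basen1} and the twenty generators of $\tilde B_4(T)$ listed in \eqref{choose1}. All of these lie in $P_4(T)+\tilde B_4(T)=\tilde{\mathcal{P}}^+_4(T)$; conversely, once the matrix below is invertible the first $35$ of them, being linearly independent elements of $P_4(T)$, form a basis of $P_4(T)$, so together with the $\tilde B_4(T)$ generators they indeed span $\tilde{\mathcal{P}}^+_4(T)$. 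This makes the spanning claim a consequence of the matrix computation rather than a separate step, in the same spirit as the first part of the proof of Theorem \ref{unique}.

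Next I would order the rows (degrees of freedom) as the $31$ moments of \eqref{degree11}--\eqref{degree13} followed by the $24$ normal moments of \eqref{degree14}, and the columns (functions) as the $31$ functions $\tilde{\phi}^{(E,4)},\tilde{\phi}^{(F,4)},\tilde{\phi}^{(T,4)}$ followed by the $24$ functions $\tilde{\phi}^{(N,4)}_{m,0}$ and the $\tilde B_4(T)$ generators. This yields a block matrix $M=\left(\begin{smallmatrix}M_{11}&M_{12}\\ M_{21}&M_{22}\end{smallmatrix}\right)$ in which $M_{11}=I_{31}$ by the duality relations \eqref{basef41}--\eqref{basef43}, and $M_{12}=0$ because every function in the second column group vanishes for the moments \eqref{degree11}--\eqref{degree13} (the $\tilde{\phi}^{(N,4)}_{m,0}$ by \eqref{se}, the enrichment functions by construction). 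Therefore $\det M=\det M_{22}$, so only $M_{22}$ must be shown invertible. The relations \eqref{m1}--\eqref{m3} all carry the Kronecker factor $\delta_{m,n}$, which means a normal function attached to face $F_m$ is seen only by the six degrees of freedom \eqref{degree14} on that same face; hence $M_{22}$ is block-diagonal with four $6\times6$ blocks, and the selection in \eqref{choose1} is arranged precisely so that each face $F_n$ carries exactly six of the normal functions.

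The main obstacle is the final step: verifying that each $6\times6$ block is nonsingular. I would assemble one block explicitly from the right-hand sides of \eqref{m1}, \eqref{m2} and \eqref{m3}, evaluated at the six exponent triples $(l_1,l_2,l_3)$ with $l_1+l_2+l_3=2$ that index $P_2(F_n)$, and check that its determinant is nonzero. The selection of the \eqref{basen3}-functions in \eqref{choose1} is cyclically symmetric under the $4$-cycle on the vertices of $T$ (face $1$ uses $t\in\{3,4\}$, face $2$ uses $\{4,1\}$, and so on), so the four blocks are congruent up to relabeling and a single determinant evaluation settles all of them. Granting this computation, $M_{22}$, and hence $M$, is invertible, completing the proof of unisolvence.
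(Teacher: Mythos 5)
Your proposal is correct and follows essentially the same route as the paper's own proof: the paper likewise uses the duality relations \eqref{basef41}--\eqref{basef43} to annihilate the coefficients of the $P_4$ functions $\tilde{\phi}^{(E,4)}_{m,l},\tilde{\phi}^{(F,4)}_{m,l},\tilde{\phi}^{(T,4)}_{1}$, then applies the scaled normal functionals $r_m\mathbb{N}^{(4)}_{m,l}$ to the remaining $24$ functions to obtain the block-diagonal matrix $\operatorname{diag}\{A,A,A,A\}$ with a single $6\times 6$ block $A$ assembled from \eqref{m1}--\eqref{m3}, and concludes from $\det(A)\neq 0$. The only differences are cosmetic: you make the dimension count and the claim that the $55$ functions span $\tilde{\mathcal{P}}^+_4(T)$ explicit consequences of the matrix's invertibility (a point the paper leaves implicit when it writes a general $u_h$ in terms of these functions), while the paper writes out the block $A$ entrywise rather than leaving that final determinant evaluation granted.
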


{\begin{proof}
Assume that
\a{ u_h=\sum_{m=1}^{6}\sum_{l=1}^{3}a_{m,l}\tilde{\phi}^{(E,4)}_{m,l}+\sum_{m=1}^{4}\sum_{l=1 }^{4}b_{m,l}\tilde{\phi}^{(F,4)}_{m,l}+c\tilde{\phi_1}^{(T,4)}+\sum_{m=1}^{4}\sum_{t\in\mathcal{S}_m}d_{m,t}\tilde{\phi}^{(N,4)}_{m,t}£¬}
for parameters $a_{m,l},b_{m,l},c$ and $d_{m,l}.$
Here $\mathcal{S}_m=\{t\in \mathbb{Z}|0\leq t\leq 4,t\neq m\}\cup\{j+4,k+4\}$, where $j$ is the index of the second vertex and $k$ is the index of the third on $F_m.$
We only need to show if $u_h$ vanishes for the degrees of freedom of \eqref{degree11} to \eqref{degree14} then $u_h$ vanishes.
    From \eqref{basef41} to \eqref{basef43},
    \a{ \ad{&\hbox{applying degrees of freedom of \eqref{degree11}--\eqref{degree13} to $u_h$} &&\Rightarrow a_{m,l}=0,b_{m,l}=0,c=0.
     } }
Next we show that the remaining 24 coefficients $d_{m,t},t\in \mathcal{S}_m(m=1,\cdots,4)$ are also zero.
An application of functionals $r_m\mathbb{N}^{(4)}_{m,l}(\cdot)(m=1,\cdots,4,~l=1,\cdots,6)$ to $u_h$,
  yields the matrix
  $\operatorname{diag}\{A,A,A,A\}$ after exchanging rows with
\a{ A = \left(
\begin{array}{cccccc}
1/6     &       1/15     &      1/90    &       1/90      &     1/60     &      1/60 \\
1/6     &       1/90     &      1/15    &       1/90      &     1/180    &      1/60  \\
1/6     &       1/90     &      1/90    &       1/15      &     1/60     &      1/180  \\
1/12    &       1/60     &      1/60    &       1/180     &     1/180    &      1/180 \\
1/12    &       1/60     &      1/180   &       1/60      &     1/90     &      1/180 \\
1/12    &       1/180    &      1/60    &       1/60      &     1/180    &      1/90
\end{array}
\right). } The fact $\det(A)\ne 0$ completes the proof.
\end{proof}

\begin{myTheo}
The finite element solution $u_h\in V_{h,4}$ in (\ref{dproblem}) satisfies
$$
|u-u_h|_{2,h}\leq Ch^3|u|_5,
$$
where $u\in H^5(\Omega)\cap H_0^2(\Omega)$ and $C$ is independent of $h$.
\end{myTheo}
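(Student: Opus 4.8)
The plan is to deduce the estimate from the abstract bound of Theorem~\ref{Hu} applied with $\ell=4$, so that the proof splits into three tasks: verifying that $V_{h,4}$ satisfies Hypotheses~\ref{hyp1} and~\ref{hyp2}, confirming that $\|D^2_h\cdot\|_0$ is a norm on $V_{h,4}$, and bounding each of the three terms on the right-hand side of Theorem~\ref{Hu} by $Ch^3|u|_5$. The key observation is that the degrees of freedom \eqref{degree11}--\eqref{degree14} are exactly \eqref{degree1}--\eqref{degree4} specialized to $\ell=4$, and the inter-element continuity built into the definition of $V_{h,4}$ is precisely that of \eqref{Vh}; hence the hypothesis verification and the coercivity argument are structurally identical to the general family, and only the approximation estimate needs the specific local space $\tilde{\mathcal{P}}^+_4(T)$ of Theorem~\ref{unisolve2}.

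For Hypothesis~\ref{hyp2}, the identity \eqref{H2} with $P_{\ell-3}(F)=P_1(F)$ is immediate from the continuity of $\int_F v\,p_1\,\mathrm dS$ across internal faces, and \eqref{H22} from its vanishing on boundary faces. For Hypothesis~\ref{hyp1} I would decompose a test field $q\in (P_2(F))^3$ on an internal face $F$ into its normal part $q_n\mathbf{n}_F$ with $q_n\in P_2(F)$ and its tangential part $q_t$. The normal contribution $\int_F[\partial_{\mathbf{n}}v_h]\,q_n\,\mathrm dS$ vanishes since $\int_F\partial_{\mathbf{n}}v\,p_2\,\mathrm dS$ is continuous and $q_n\in P_2(F)$. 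For the tangential contribution, integrating by parts on the two-dimensional face $F$ gives, after taking the jump across $F$,
\[
\int_F[\nabla_h v_h]\cdot q_t\,\mathrm dS
  =-\int_F[v_h]\,\operatorname{div}_t q_t\,\mathrm dS
    +\int_{\partial F}[v_h]\,(q_t\cdot\mathbf{n}_{\partial F})\,\mathrm ds,
\]
where $\operatorname{div}_t$ is the surface divergence, so that $\operatorname{div}_t q_t\in P_1(F)$ and $q_t\cdot\mathbf{n}_{\partial F}$ restricted to each edge $e\subset\partial F$ lies in $P_2(e)$. The face term is killed by the continuity of $\int_F v\,p_1\,\mathrm dS$, and each edge term by the single-valuedness of $\int_e v\,p_2\,\mathrm ds$ at internal edges (the two tetrahedra sharing $F$ also share every edge of $F$). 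The boundary versions of these moments dispose of domain-boundary faces, yielding \eqref{H12}. This integration-by-parts step, which genuinely uses how the normal-derivative, face, and edge moments were built into the element, is the crux of the argument and the main obstacle; everything else is routine.

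Next I would record that $\|D^2_h\cdot\|_0$ is a norm on $V_{h,4}$ by repeating the uniqueness argument already given in the proof of Theorem~\ref{crate}: if $D^2_h v_h=0$ elementwise then $\nabla_h v_h$ is piecewise constant, Hypothesis~\ref{hyp1} promotes it to a global constant, the normal boundary moments force it to vanish, so $v_h$ is piecewise constant; Hypothesis~\ref{hyp2} then makes $v_h$ globally constant and the function-value boundary moments give $v_h=0$. Hence \eqref{dproblem} is a square system with trivial kernel and admits a unique solution $u_h\in V_{h,4}$.

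Finally I would apply Theorem~\ref{Hu} with $\ell=4$ and estimate the three terms. Since $P_4(T)\subset\tilde{\mathcal{P}}^+_4(T)$ by \eqref{Vhl3}, I take $s_h$ to be the canonical interpolant of $u$ defined by the degrees of freedom \eqref{degree11}--\eqref{degree14}; this is well defined for $u\in H^5(\Omega)\hookrightarrow C^3(\bar\Omega)$ and lies in $V_{h,4}$ because the moments of a smooth $u$ are single-valued across elements, and a standard scaling and Bramble--Hilbert argument gives $\inf_{s_h}\|D^2_h(u-s_h)\|_0\le Ch^3|u|_5$. For the second term, $D^2u\in H^3(\omega_F)$ and $\Pi_{4,\omega_F}$ reproduces $P_4$, so $\|(I-\Pi_{4,\omega_F})D^2u\|_{0,\omega_F}\le Ch^3|u|_{5,\omega_F}$, and summation over the finitely overlapping patches gives $Ch^3|u|_5$. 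For the third term, $f=\Delta^2u\in H^1(\Omega)$ with $|f|_1\le C|u|_5$, whence $\|(I-\Pi_{3,T})f\|_{0,T}\le Ch|f|_{1,T}$ and $\big(\sum_T h^4\|(I-\Pi_{3,T})f\|_{0,T}^2\big)^{1/2}\le Ch^3|u|_5$. Collecting the three bounds and recalling $|u-u_h|_{2,h}=\|D^2_h(u-u_h)\|_0$ completes the proof.
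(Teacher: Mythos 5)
Your proof is correct and follows essentially the same route the paper intends: the paper states this theorem without an explicit proof, relying---exactly as in its proof of Theorem~\ref{crate}---on the kernel/uniqueness argument, the abstract estimate of Theorem~\ref{Hu}, and standard interpolation theory. Your verification of Hypotheses~\ref{hyp1} and~\ref{hyp2} (splitting $q$ into normal and tangential parts and integrating by parts on the face, with the face and edge moments killing the boundary terms) and your term-by-term $O(h^3)$ bounds simply fill in the details the paper leaves implicit.
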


   We will present all the basis functions in appendix.

\subsection{Enriched $P_5(T)$ in 3D} The degrees of freedom for this case are
\begin{equation}
   \mathbb{E}^{(5)}_{m,l}(v)
    =\frac{1}{|e_m|}\int_{e_m}v\lambda_i^{l_1}\lambda_j^{l_2}\mathrm{d}s,~ m=1,\cdots,6,
       \label{degree21}
\end{equation}
where $(l_1,l_2)$ is $(3,0),(0,3),(2,1),(1,2)$ for $l=1,\cdots,4$, respectively;
\begin{equation}
\mathbb{F}^{(5)}_{m,l}(v)
      =\frac{1}{|F_m|}\int_{F_m}v\lambda_i^{l_1}\lambda_j^{l_2}\lambda_k^{l_3}\mathrm{d}S,~
       m=1,\cdots,4;
      \label{degree22}
\end{equation}
here $(l_1,l_2,l_3)$ is
$$(2,0,0),(0,2,0),(0,0,2),(0,1,1),(1,0,1),(1,1,0)$$ for $l=1,\cdots,6$, respectively;
\begin{equation}
\mathbb{T}^{(5)}_l(v)
     =\frac{1}{|T|}\int_Tv\lambda_l \mathrm{d}\mathbf{x},~ l=1,\cdots,4;
     \label{degree23}
\end{equation}
\begin{equation}
\mathbb{N}^{(5)}_{m,l}(v)
    =\frac{1}{|F_m|}
     \int_{F_m}\partial_{\mathbf{n}}v\lambda_i^{l_1}\lambda_j^{l_2}\lambda_k^{l_3}
                 \mathrm{d}S,~m=1,\cdots,4,
    \label{degree24}
\end{equation}
where the multi index $(l_1,l_2,l_3)$ is
\begin{equation} 
(3,0,0),(0,3,0),(0,0,3),(2,1,0),(2,0,1),(0,2,1),(1,2,0),(1,0,2),(0,1,2),(1,1,1)
\label{index5}
\end{equation}
for $l=1,\cdots,10,$ respectively.
  We need $P_7$ polynomials because the number of degrees of freedom of
    (\ref{degree21}) to (\ref{degree24}) is 92,
      which is bigger than $\text{dim}P_6(T)=84$. Next we present 40 functions associated with degrees of freedom of \eqref{degree24}, which vanish for degrees of freedom of \eqref{degree21} to \eqref{degree23}. Firstly, let $\ell=5$ in \eqref{se} we get four functions
\begin{equation}
  \tilde{\phi}^{(N,5)}_{m,0}=-\frac{1}{5}\lambda_m(126\lambda_m^4-280\lambda_m^3+210
    \lambda_m^2-60\lambda_m+5),~ m=1,\cdots,4.
     \label{basen41}
\end{equation}
Secondly we define the 24 $P_7$ functions as follows
\begin{equation}
\tilde{\phi}^{(N,5)}_{m,t}=-\lambda_t^2\lambda_m(66\lambda_m^4-120\lambda_m^3
+72\lambda_m^2-16\lambda_m+1),~1\leq m\neq t\leq 4.
\label{basen42}
\end{equation}
\begin{align}
\tilde{\phi}^{(N,5)}_{m,4+t}=&-\frac{1}{4}\lambda_t^2\lambda_m(
99\lambda_m^4-165\lambda_m^3\lambda_t-135\lambda_m^3+180\lambda_m^2\lambda_t
  +54\lambda_m^2\label{basen43}\\&-54\lambda_t\lambda_m-6\lambda_m+4\lambda_t),~1\leq m\neq t\leq 4\notag.
\end{align}
Define the following twelve $P_7$ functions
\begin{equation}
\tilde{\phi}^{(N,5)}_{m,8+t}=-b_{F_t}(66\lambda_m^4-120\lambda_m^3
+72\lambda_m^2-16\lambda_m+1),~1\leq m\neq t\leq 4.
\label{basen44}
\end{equation}
 The eight functions which we need, can be gotten by choosing the multi index $(m,t)$ as $$(1,3),(1,4),(2,4),(2,1),(3,1),(3,2),(4,2),(4,3).$$ And the last four functions that we need are
\begin{equation}
\tilde{\phi}^{(N,5)}_{m,13}=\frac{1}{4}\lambda_ib_{F_k}(165\lambda_m^3-180\lambda_m^2+54\lambda_m-4),
\label{basen45}~m=1,\cdots,4,	
\end{equation}
where $i,k$ are the index of the first and the third vertices on $F_m$, respectively.
 Thus, we get all 40 functions associated with degrees of freedom of \eqref{degree24}. Similarly, define the shape function space as
\an{\label{Vhl4} \tilde{\mathcal{P}}^+_5(T)&=P_5(T)+ \tilde{B}_5(T) }
 \an{\notag\text{with } \tilde{B}_5(T)&= \operatorname{span}
    \{\tilde{\phi}^{(N,5)}_{m,t},\tilde{\phi}^{(N,5)}_{m,4+t},\tilde{\phi}^{(N,5)}_{1,11},\tilde{\phi}^{(N,5)}_{1,12},\tilde{\phi}^{(N,5)}_{2,12},
 \tilde{\phi}^{(N,5)}_{2,9},\tilde{\phi}^{(N,5)}_{3,9},  \\&\notag\qquad
    \tilde{\phi}^{(N,5)}_{3,10},\tilde{\phi}^{(N,5)}_{4,10},
     \tilde{\phi}^{(N,5)}_{4,11},\tilde{\phi}^{(N,5)}_{1,13},\tilde{\phi}^{(N,5)}_{2,13},
      \tilde{\phi}^{(N,5)}_{3,13},\tilde{\phi}^{(N,5)}_{4,13}\},~ 1\leq m\neq t\leq 4.
       }
       The 36 functions of $\tilde{B}_5(T)$ are linearly independent, which will be shown
       in Theorem \ref{unisolve3} below.
Then the global finite element space is defined by
\begin{align*}
V_{h,5}=&\{v\in L^2{\Omega}|~ v|_T\in \tilde{\mathcal{P}}^+_5(T),\\
&\int_e vp_3\mathrm{d}s~\text{ is continuous at internal edges of }\mathcal{T}_h,\\
&\int_F vp_2 \r dS ~\text{and}~\int_F \partial_{\mathbf{n}}vp_3\mathrm{d}S
\text{ are continuous on internal face triangles of } \mathcal{T}_h\\
&\int_e vp_3\mathrm{d}s=\int_F vp_2 \r dS=\int_F \partial_{\mathbf{n}}v p_3\mathrm{d}S=0\\
&\text{at boundary edges and on boundary face triangles of }\mathcal{T}_h
\}.
\end{align*} 
Next we define 52 $P_5$ functions which do not vanish for
          degrees of freedom of (\ref{degree24}).
 First of all, the following 24 $P_5$ functions with an edge bubble $b_{e_m}=\lambda_i\lambda_j$ are needed,
\begin{align*}
\tilde{\phi}^{(E,5)}_{m,1}&=60b_{e_m}
(42\lambda_i^3-56\lambda_i^2+21\lambda_i-2),\\
\tilde{\phi}^{(E,5)}_{m,2}&=60b_{e_m}
(42\lambda_j^3-56\lambda_j^2+21\lambda_j-2),\\
\tilde{\phi}^{(E,5)}_{m,3}&=60b_{e_m}
(252\lambda_i^2\lambda_j-168\lambda_i\lambda_j-56\lambda_i^2+
42\lambda_i+21\lambda_j-6),\\
\tilde{\phi}^{(E,5)}_{m,4}&=60b_{e_m}
(252\lambda_j^2\lambda_i-168\lambda_i\lambda_j-56\lambda_j^2+
42\lambda_j+21\lambda_i-6),
\end{align*}
where $\lambda_i$ and $\lambda_j(i<j)$ are two barycentric coordinates of edge $e_m,~m=1,\cdots,6.$
Next, we need the following twenty-four $P_5$ basis functions with a face bubble function $b_{F_m}=\lambda_i\lambda_j\lambda_k,$
\a{
     \tilde{\phi}^{(F,5)}_{m,1}&=1260b_{F_m}f_1(\lambda_i),&
     \tilde{\phi}^{(F,5)}_{m,2}&= 1260b_{F_m}f_1(\lambda_j),\\
     \tilde{\phi}^{(F,5)}_{m,3}&= 1260b_{F_m}f_1(\lambda_k),&
     \tilde{\phi}^{(F,5)}_{m,4}&= 2520b_{F_m}f_2(\lambda_i,\lambda_j),\\
     \tilde{\phi}^{(F,5)}_{m,5}&= 2520b_{F_m}f_2(\lambda_i,\lambda_k),&
     \tilde{\phi}^{(F,5)}_{m,6}&= 2520b_{F_m}f_2(\lambda_j,\lambda_k), }
where $\lambda_i,\lambda_j,\lambda_k$ are three barycentric coordinates of face $F_m,$ and $f_1(\lambda_l)=12\lambda_l^2-8\lambda_l+1,~f_2(\lambda_{l_1},\lambda_{l_2})=18\lambda_{l_1}
      \lambda_{l_2} -4\lambda_{l_1}-4\lambda_{l_2}+1,m=1,\dots,4.$
The last four $P_5$ functions that we need are
\begin{equation*}
\tilde{\phi}^{(T,5)}_{m}=3360\lambda_1\lambda_2\lambda_3\lambda_4(9\lambda_m-2),~ m=1,\cdots,4.
\end{equation*}
And the 52 $P_5$ functions satisfy
$$
\mathbb{E}^{(5)}_{n,l}(\tilde{\phi}^{(E,5)}_{m,t})=\delta_{m,n}\delta_{l,t},~\mathbb{F}^{(5)}_{l,n}(\tilde{\phi}^{(E,5)}_{m,t})=0,~\mathbb{T}^{(5)}_{l}(\tilde{\phi}^{(E,5)}_{m,t})=0,
$$
where $~1\leq m,n\leq 6,~
1\leq l,t\leq 4;$
$$
\mathbb{E}^{(5)}_{n,l}(\tilde{\phi}^{(F,5)}_{m,t})=0,~\mathbb{F}^{(5)}_{l,n}(\tilde{\phi}^{(F,5)}_{m,t})=\delta_{m,l}\delta_{n,t},~\mathbb{T}^{(5)}_{l}(\tilde{\phi}^{(F,5)}_{m,t})=0,
$$
where $1\leq n,t\leq 6,~1\leq m,l\leq 4;$
$$
\mathbb{E}^{(5)}_{n,l}(\tilde{\phi}^{(T,5)}_{t})=0,~\mathbb{F}^{(5)}_{l,n}(\tilde{\phi}^{(T,5)}_{t})=0,~\mathbb{T}^{(5)}_{l}(\tilde{\phi}^{(T,5)}_{t})=\delta_{l,t},
$$
where $1\leq n\leq 6,~1\leq l,t\leq 4.$

\begin{myTheo}
$\tilde{\mathcal{P}}^+_5(T)$ is unisolved for degrees of freedom of (\ref{degree21}) to (\ref{degree24}).
\label{unisolve3}
\end{myTheo}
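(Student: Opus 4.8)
The plan is to reproduce the three-stage argument already used for Theorem \ref{unisolve2}, now applied to the $92$-dimensional space $\tilde{\mathcal{P}}^+_5(T)$ tested against the $92$ functionals in \eqref{degree21}--\eqref{degree24}. First I would fix the natural basis: the $52$ functions $\tilde{\phi}^{(E,5)}_{m,t},\tilde{\phi}^{(F,5)}_{m,t},\tilde{\phi}^{(T,5)}_{m}$ dual to the edge, face and interior moments; the four functions $\tilde{\phi}^{(N,5)}_{m,0}$ of \eqref{basen41}; and the $36$ enriching functions spanning $\tilde{B}_5(T)$. Since these $56$ functions are exactly the $P_5$ basis dual to \eqref{degree21}--\eqref{degree23} together with the four constant normal functionals (as in Theorem \ref{unique}), and $\dim\tilde{B}_5(T)=36$, the listed $92$ functions span $\tilde{\mathcal{P}}^+_5(T)$ once independence is established, which the argument below will do. I then write an arbitrary $u_h\in\tilde{\mathcal{P}}^+_5(T)$ as a combination of these basis functions, assume all degrees of freedom of \eqref{degree21}--\eqref{degree24} annihilate $u_h$, and prove $u_h=0$.

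The second stage removes the $P_5$ contribution. Because every normal-type function ($\tilde{\phi}^{(N,5)}_{m,0}$ and the $36$ enrichments) vanishes on the moments \eqref{degree21}--\eqref{degree23}, applying those $52$ functionals and invoking the Kronecker-delta identities recorded just before the theorem forces the $52$ coefficients of the $\tilde{\phi}^{(E,5)},\tilde{\phi}^{(F,5)},\tilde{\phi}^{(T,5)}$ functions to vanish. At this point $u_h$ reduces to a combination of the $40$ remaining normal-type functions, all lying in the kernel of \eqref{degree21}--\eqref{degree23}.

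The third and decisive stage applies the $40$ scaled normal functionals $r_n\mathbb{N}^{(5)}_{n,l}$, $1\le n\le 4$, $1\le l\le 10$. As in the $P_4$ case each enriching function carries a factor $\lambda_t^2$ or a face bubble $b_{F_t}$ that confines its nonvanishing normal moments to a single face, so the $\ell=5$ analogues of \eqref{m1}--\eqref{m3} (each bearing a $\delta_{m,n}$) render the $40\times 40$ coefficient matrix block diagonal, $\operatorname{diag}\{A,A,A,A\}$ after a suitable row permutation, with $A$ a $10\times10$ matrix whose entries are the closed-form integrals $\tfrac{2!\,l_1!\,l_2!\,l_3!}{(l_1+l_2+l_3+2)!}$ of the relevant monomials over a face. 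The ten functions grouped on each face --- one from \eqref{basen41}, three from \eqref{basen42}, three from \eqref{basen43}, two selected from \eqref{basen44}, and one from \eqref{basen45} --- match the ten normal moments indexed by \eqref{index5}, so $A$ is square.

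The main obstacle is the final determinant computation: I expect to assemble $A$ explicitly from the monomial-integral formula and verify $\det A\neq 0$, exactly as the $6\times6$ determinant closed Theorem \ref{unisolve2}. Establishing $\det A\neq 0$ simultaneously yields the linear independence of the $36$ functions of $\tilde{B}_5(T)$ and the full unisolvence of $\tilde{\mathcal{P}}^+_5(T)$, completing the proof. The only point requiring care beyond bookkeeping is confirming the $\delta_{m,n}$ block structure for the families \eqref{basen43} and \eqref{basen45}, whose $\lambda_t$- and $b_{F_k}$-dependence is less symmetric; I would check directly that their normal moments on faces $F_n$ with $n\neq m$ vanish, so that no off-diagonal blocks appear.
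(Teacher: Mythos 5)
Your overall three-stage structure (eliminate the 52 edge/face/interior coefficients first, then reduce to the 40 normal-type functions and test them against the functionals $r_n\mathbb{N}^{(5)}_{n,l}$) is exactly the paper's strategy, and your counting of the ten functions per face is correct. But there is a genuine gap at the decisive step: the $40\times 40$ matrix is \emph{not} block diagonal for $\ell=5$, and the verification you defer to the end --- ``check directly that their normal moments on faces $F_n$ with $n\neq m$ vanish, so that no off-diagonal blocks appear'' --- fails precisely for the family \eqref{basen45}. Take $m=4$, so $\tilde{\phi}^{(N,5)}_{4,13}=\tfrac14\lambda_1 b_{F_3}(165\lambda_4^3-180\lambda_4^2+54\lambda_4-4)=\tfrac14\lambda_1^2\lambda_2\lambda_4(165\lambda_4^3-180\lambda_4^2+54\lambda_4-4)$. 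On $F_2=\{\lambda_2=0\}$ this function carries only a single factor of $\lambda_2$, so its normal derivative restricted to $F_2$ equals $\tfrac14\lambda_1^2\lambda_4(165\lambda_4^3-180\lambda_4^2+54\lambda_4-4)\,(\mathbf{n}_2\cdot\nabla\lambda_2)$, which is not identically zero; its zeroth moment over $F_2$ is proportional to
\begin{equation*}
165\cdot\frac{2!\,4!\,2!}{8!}-180\cdot\frac{2!\,3!\,2!}{7!}+54\cdot\frac{2!\,2!\,2!}{6!}-4\cdot\frac{2!\,1!\,2!}{5!}
=\frac{11}{28}-\frac{6}{7}+\frac{3}{5}-\frac{2}{15}=\frac{1}{420}\neq 0 .
\end{equation*}
This is exactly why the paper's matrix \eqref{matrix5} contains the off-diagonal entries $c_1\mathbf{e}_1$ and $c_2\mathbf{e}_2$ with $c_1=-c_2=\tfrac{1}{1680}$ in the columns belonging to the $\tilde{\phi}^{(N,5)}_{n,13}$ functions: each of these functions couples to the normal functionals of two faces other than its own.

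Consequently your reduction to $\operatorname{diag}\{A,A,A,A\}$ with $\det A\neq 0$ proves invertibility of the wrong matrix, and the argument as written does not close. The repair is what the paper does: keep the full coupled $40\times 40$ system, observe that the coupling is confined to the four ``13'' columns, write the matrix in the block form \eqref{matrix5} with diagonal blocks $[A,\mathbf{a}]$ (a genuine $10\times 10$ matrix whose entries are not the simple monomial integrals of the $P_4$ case, since the functions \eqref{basen43} and \eqref{basen45} mix $\lambda_m$ and $\lambda_t$), and verify that this coupled matrix is invertible. A secondary, smaller inaccuracy in your write-up: the entries of the diagonal block are not of the form $\tfrac{2!\,l_1!\,l_2!\,l_3!}{(l_1+l_2+l_3+2)!}$; that formula is the $\ell=4$ identity \eqref{m1}, and for $\ell=5$ the explicit block $[A,\mathbf{a}]$ listed in the paper must be computed from the actual functions \eqref{basen41}--\eqref{basen45}.
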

\begin{proof}
Let $u_h\in \tilde{\mathcal{P}}^+_5(T)$ vanish for all 92 degrees of freedom of (\ref{degree21}) to
(\ref{degree24}). Then we only need to show $u_h=0$. Assume that
\begin{equation} \label{eq5}
   u_h=\sum_{m=1}^{6}\sum_{l=1}^{4}a_{m,l}\tilde{\phi}^{(E,5)}_{m,l}+\sum_{m=1}^{4}\sum_{l=1}^{6}b_{m,l}
    \tilde{\phi}^{(F,5)}_{m,l}+\sum_{m=1}^{4}c_{m}\tilde{\phi}^{(T,5)}_{m}
     +\sum_{m=1}^{4}\sum_{l\in \mathcal{S}_m}d_{m,l}\tilde{\phi}^{(N,5)}_{m,l}=0,
\end{equation}
for parameters $a_{m,l},b_{m,l},c_m$ and $d_{m,l}$. 
 Here $$\mathcal{S}_m=\{t\in \mathbb{Z}|0\leq t\leq 8,t\neq m,t\neq m+4\}\cup\{j+8,k+8,13\},$$
 where $j,k$ are the index of the second and the third vertices on $F_m$, respectively.
 Sequentially,
 \a{ \ad{&\hbox{applying degrees of freedom of \eqref{degree21}--\eqref{degree23} to $u_h$} &&\Rightarrow a_{m,l}=0,b_{m,l}=0,c_m=0.
 		} }
Then we apply the functional $r_m\mathbb{N}^{(5)}_{m,l}(\cdot)(m=1,\cdots,4,~l=1,\cdots,10)$ to $u_h$,
to get the following matrix for the linear system \eqref{eq5} after exchanging rows
\begin{equation}
\left(
\begin{array}{cccccccc}
A&\mathbf{a}&\mathbf{0}&c_1\mathbf{e_1} &\mathbf{0}&c_2\mathbf{e_2}&\mathbf{0}&\mathbf{0}\\
\mathbf{0}&\mathbf{0}& A&\mathbf{a}&\mathbf{0} &c_1\mathbf{e_1}&\mathbf{0}&c_2\mathbf{e_2} \\
\mathbf{0}&c_2\mathbf{e_2} & \mathbf{0}&\mathbf{0} & A&\mathbf{a}&\mathbf{0} &c_1\mathbf{e_1}\\
\mathbf{0}&c_1\mathbf{e_1} &\mathbf{0}&c_2\mathbf{e_2}&\mathbf{0} &\mathbf{0} & A&\mathbf{a}
\end{array}
\right),
\label{matrix5}
\end{equation}
where $c_1=-c_2=\frac{1}{1680}$, $\mathbf{e}_t$ is the $t$-th canonical basis vector of $\mathbb{R}^{10}$.
$$
[A,\mathbf{a}]=\left(\begin{array}{cccccccccc}
       1/10   &        1/21     &      1/210    &      1/210    &      1/28     &      1/560    &      1/560    &      1/105    &      1/105   &       1/168  \\ 
       1/10   &        1/210    &      1/21     &      1/210    &      1/560    &      1/28     &      1/560    &      1/420    &      1/105   &       1/420  \\
       1/10   &        1/210    &      1/210    &      1/21     &      1/560    &      1/560    &      1/28     &      1/105    &      1/420   &       1/1680 \\ 
       1/30   &        1/105    &      1/210    &      1/630    &      1/168    &      1/420    &      1/1680   &      1/420    &      1/210   &       1/420  \\ 
       1/30   &        1/105    &      1/630    &      1/210    &      1/168    &      1/1680   &      1/420    &      1/210    &      1/420   &       1/840  \\ 
       1/30   &        1/630    &      1/105    &      1/210    &      1/1680   &      1/168    &      1/420    &      1/630    &      1/420   &       1/1680 \\ 
       1/30   &        1/210    &      1/105    &      1/630    &      1/420    &      1/168    &      1/1680   &      1/630    &      1/210   &       1/560  \\ 
       1/30   &        1/210    &      1/630    &      1/105    &      1/420    &      1/1680   &      1/168    &      1/210    &      1/630   &       1/1680 \\ 
       1/30   &        1/630    &      1/210    &      1/105    &      1/1680   &      1/420    &      1/168    &      1/420    &      1/630   &       1/2520 \\ 
       1/60   &        1/420    &      1/420    &      1/420    &      1/840    &      1/840    &      1/840    &      1/630    &      1/630   &       1/1680    
\end{array}
\right)
$$
 One can check the matrix in (\ref{matrix5}) is invertable which completes the proof.
\end{proof}

\begin{myTheo}
	The finite element solution $u_h\in V_{h,5}$ in (\ref{dproblem}) satisfies
	$$
	|u-u_h|_{2,h}\leq Ch^4|u|_6,
	$$
	where $u\in H^6(\Omega)\cap H_0^2(\Omega)$ and $C$ is independent of $h$.
\end{myTheo}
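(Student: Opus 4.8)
The plan is to follow the same route as in the proof of Theorem \ref{crate}, reducing the estimate to an application of the abstract bound in Theorem \ref{Hu} with $\ell=5$. The crucial observation is that the degrees of freedom \eqref{degree21}--\eqref{degree24} defining $V_{h,5}$ coincide exactly with \eqref{degree1}--\eqref{degree4} at $\ell=5$, and that the global continuity conditions imposed in the definition of $V_{h,5}$ (continuity of $\int_e v\,p_3$, $\int_F v\,p_2$ and $\int_F\partial_{\mathbf n}v\,p_3$ across internal faces, together with their vanishing on the boundary) involve only face and edge moments of $v_h$ and of $\partial_{\mathbf n}v_h$. Consequently the verification of Hypotheses \ref{hyp1} and \ref{hyp2} depends only on these conditions and is \emph{independent} of the interior structure of the reduced enrichment $\tilde B_5(T)$; in particular the argument is identical to the one that validates them for the general family $\mathcal P_\ell^+(T)$.

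First I would verify Hypothesis \ref{hyp2}: since $\int_F v_h\,p_2$ is continuous across each internal face and vanishes on boundary faces, \eqref{H2} holds for $q\in P_{\ell-3}(F)=P_2(F)$. For Hypothesis \ref{hyp1} the normal part of the jump $[\nabla_h v_h]$ is handled directly by the continuity of $\int_F\partial_{\mathbf n}v_h\,p_3$, which annihilates the jump against $P_{\ell-2}(F)=P_3(F)$. For the tangential part I would note that, for a tangent direction $\mathbf t$ on $F$, the jump $[\mathbf t\cdot\nabla_h v_h]=\partial_{\mathbf t}[v_h]$ depends only on the trace jump $[v_h]$, and integrate by parts on the $2$D face,
\a{ \int_F \partial_{\mathbf t}[v_h]\,q\,\mathrm dS
   = \int_{\partial F}[v_h]\,q\,(\mathbf t\cdot\bm\nu)\,\mathrm ds
     -\int_F [v_h]\,(\mathbf t\cdot\nabla q)\,\mathrm dS, }
where $\bm\nu$ is the in-plane co-normal of $\partial F$. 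The boundary term involves edge moments of $[v_h]$ against $q|_e\in P_3(e)$ and the interior term face moments against $\partial_{\mathbf t}q\in P_2(F)$; both vanish by the continuity (respectively boundary) conditions of $V_{h,5}$, so the tangential jump integrates to zero against $P_3(F)$. This establishes \eqref{H1} and \eqref{H12}. The norm property of $\|D_h^2\cdot\|_0$, hence the unique solvability of \eqref{dproblem}, then follows by the same three-step argument as in Theorem \ref{crate}: $D_h^2u_h=0$ forces $\nabla_h u_h$ piecewise constant, Hypothesis \ref{hyp1} makes it globally constant, the boundary condition makes it zero, and Hypothesis \ref{hyp2} with the function-value boundary condition forces $u_h\equiv0$. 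Unisolvence of $\tilde{\mathcal P}^+_5(T)$ is already provided by Theorem \ref{unisolve3}.

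With the hypotheses verified, Theorem \ref{Hu} yields
\a{ \|D_h^2(u-u_h)\|_0 &\le C\inf_{s_h\in V_{h,5}}\|D_h^2(u-s_h)\|_0
  + C\Big(\sum_{F}\|(I-\Pi_{5,\omega_F})D^2u\|_{0,\omega_F}^2\Big)^{1/2} \\
  &\quad + \Big(\sum_{T}h^4\|(I-\Pi_{4,T})f\|_{0,T}^2\Big)^{1/2}, }
and it remains to bound the three terms by $Ch^4|u|_6$. Since $P_5(T)\subset\tilde{\mathcal P}^+_5(T)$, the standard $P_5$ interpolant, combined with the Bramble--Hilbert lemma and a scaling argument, bounds the first term by $Ch^4|u|_6$. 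For the second term, $D^2u\in H^4$ because $u\in H^6$, so the $L^2$-projection estimate gives $\|(I-\Pi_{5,\omega_F})D^2u\|_{0,\omega_F}\le Ch^4|u|_{6,\omega_F}$, and finite overlap of the patches $\omega_F$ yields $Ch^4|u|_6$ after summation. The third term is of higher order: writing $f=\Delta^2u$ and using $|f|_{2,T}\le C|u|_{6,T}$ gives $h^4\|(I-\Pi_{4,T})f\|_{0,T}\le Ch^6|u|_{6,T}$, which is dominated by $h^4|u|_6$. Collecting the bounds completes the proof.

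The step requiring the most care is the verification of Hypothesis \ref{hyp1} for the tangential component of the gradient jump, since it is there that the precise matching of the prescribed moment orders (edge moments up to $P_3$, face moments up to $P_2$) against the required test space $(P_3(F))^3$ must be checked. Once this integration-by-parts bookkeeping is in place, the remaining steps are routine consequences of Theorem \ref{Hu} and standard approximation theory, exactly as in the general family treated in Theorem \ref{crate}.
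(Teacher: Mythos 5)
Your proposal is correct and takes essentially the same route as the paper: the paper states this theorem without a separate proof, relying on the template of Theorem \ref{crate} --- Hypotheses \ref{hyp1} and \ref{hyp2} hold because they are built into the continuity conditions defining $V_{h,5}$, unique solvability follows from the three-step norm argument, and the error bound follows from Theorem \ref{Hu} (with $\ell=5$) plus standard interpolation and projection estimates --- which is precisely what you carry out, with your integration-by-parts verification of the tangential part of Hypothesis \ref{hyp1} filling in a detail the paper leaves implicit. One small point of care: the competitor realizing the infimum should be the canonical interpolant defined by the degrees of freedom \eqref{degree21}--\eqref{degree24} (which lies in $V_{h,5}$ since $u\in H^6(\Omega)\cap H^2_0(\Omega)$ makes those functionals single-valued and zero on the boundary), rather than a generic ``standard $P_5$ interpolant'' such as the Lagrange one, which need not belong to $V_{h,5}$.
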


\section{Numerical tests}
Let the  domain of the boundary value problem~\eqref{problem} be the unit cubic
$\Omega=(0,1)^3$.
The exact solution is
\begin{equation} \label{s1} u(x,y,z)=2^{10} (x-x^2)^2 (y-y^2)^2 (z-z^2)^2. \end{equation}
We choose a family of uniform grids, shown in Figure~\ref{grid}, for all
tests.

\begin{figure}[htb] \setlength\unitlength{1pt}
	\begin{center}  \begin{picture}(380,100)(0,0)
		\put(0,0){\begin{picture}(100,100)(0,0)
			\multiput(0,0)(80,0){2}{\line(0,1){80}}     \multiput(0,0)(0,80){2}{\line(1,0){80}}
			\multiput(80,0)(0,80){2}{\line(2,1){40}}    \multiput(0,80)(80,0){2}{\line(2,1){40}}
			\multiput(80,0)(40,20){2}{\line(0,1){80}}    \multiput(0,80)(40,20){2}{\line(1,0){80}}
			\multiput(0,0)(40,0){1}{\line(1,1){80}}
			\multiput(80,0)(0,80){1}{\line(2,5){40}}    \multiput(0,80)(80,0){1}{\line(6,1){120}}
			\end{picture} }
		\put(130,0){\begin{picture}(100,100)(0,0)
			\multiput(0,0)(40,0){3}{\line(0,1){80}}     \multiput(0,0)(0,40){3}{\line(1,0){80}}
			\multiput(80,0)(0,40){3}{\line(2,1){20}}    \multiput(0,80)(40,0){3}{\line(2,1){20}}
			\multiput(100,10)(0,40){3}{\line(2,1){20}}    \multiput(20,90)(40,0){3}{\line(2,1){20}}
			\multiput(80,0)(20,10){3}{\line(0,1){80}}    \multiput(0,80)(20,10){3}{\line(1,0){80}}
			\multiput(0,0)(40,0){1}{\line(1,1){80}}     \multiput(40,0)(-40,40){2}{\line(1,1){40}}
			
			\multiput(80,0)(0,80){1}{\line(2,5){40}}    \multiput(0,80)(80,0){1}{\line(6,1){120}}
			\multiput(100,10)(-20,30){2}{\line(2,5){20}} \multiput(40,80)(-20,10){2}{\line(6,1){60}}
			\end{picture} }
		\put(260,0){\begin{picture}(100,100)(0,0)
			\multiput(0,0)(20,0){4}{\multiput(0,0)(0,20){4}{\line(1,1){20}}}
			\multiput(0,0)(20,0){4}{\multiput(0,0)(0,20){5}{\line(1,0){20}}}
			\multiput(0,0)(20,0){5}{\multiput(0,0)(0,20){4}{\line(0,1){20}}}
			
			\multiput(0,80)(20,0){4}{\multiput(0,0)(10,5){4}{\line(6,1){30}}}
			\multiput(0,80)(20,0){5}{\multiput(0,0)(10,5){4}{\line(2,1){10}}}
			\multiput(0,80)(20,0){4}{\multiput(0,0)(10,5){5}{\line(1,0){20}}}
			
			\multiput(80,0)(0,20){5}{\multiput(0,0)(10,5){4}{\line(2,1){10}}}
			\multiput(80,0)(0,20){4}{\multiput(0,0)(10,5){4}{\line(2,5){10}}}
			\multiput(80,0)(0,20){4}{\multiput(0,0)(10,5){5}{\line(0,1){20}}}
			
			\end{picture} }
	\end{picture} \end{center}
	\caption{ \label{grid} The first three grids on the unit cube domain $\Omega$. }
\end{figure}
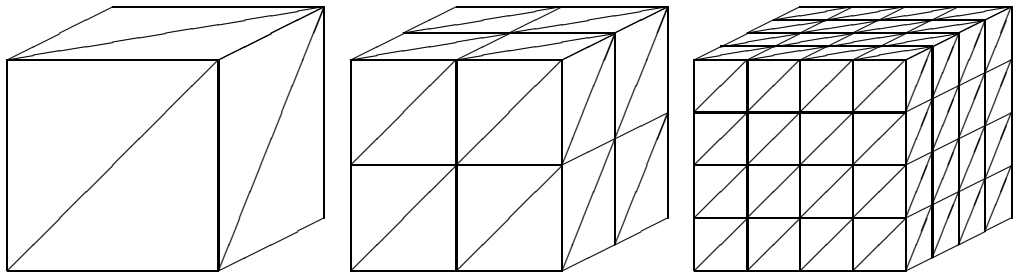

We first solve the biharmonic problem \eqref{problem} with the exact solution \eqref{s1} by the
$P_3$ finite element method \eqref{Vhl} (with $\ell=3$), i.e., the $P_3+8P_7$ element
($P_3$ polynomials plus eight $P_7$  polynomials on each tetrahedron).
We can see,  from Table \ref{t1}, that the numerical solution converges at order 2, 3 and 4 in
$H^2$-norm, $H^1$-norm and $L^2$-norm, respectively.

\begin{table}[h!]
	\caption{\label{t1} The error
		and the order of convergence, by the  $P_3+8P_7$   finite element \eqref{Vhl3} (with $\ell=3$).  }
	\begin{center}  \begin{tabular}{c|rr|rr|rr} \hline
			grid  & $ \|u-u_h\|_{0}$ &$h^n$ &$ |u-u_h|_{1}$ & $h^n$ & $  |u-u_h|_{2}$ & $h^n$   \\
			\hline
			1&     0.0506877&0.0&     0.3979739&0.0&     3.8919205&0.0\\
			2&     0.0367926&0.5&     0.2479380&0.7&     3.7328475&0.1\\
			3&     0.0053230&2.8&     0.0424815&2.5&     1.3958787&1.4\\
			4&     0.0005108&3.4&     0.0049778&3.1&     0.3761410&1.9\\
			5&     0.0000398&3.7&     0.0005033&3.3&     0.0923416&2.0\\
			\hline
		\end{tabular}\end{center} \end{table}

 We next solve the biharmonic problem \eqref{problem} with the exact solution \eqref{s1} by the
		$P_4+20P_8$ finite element method \eqref{Vhl} (with $\ell=4$), i.e.,
		the full $P_4$ polynomials plus twenty $P_8$ polynomials on each tetrahedron.
		In Table \ref{t2}, we list the orders of convergence of the numerical solutions,
		which are  3, 4 and 5 in
		$H^2$-norm, $H^1$-norm and $L^2$-norm, respectively.

		\begin{table}[h!]
			\caption{\label{t2} The error
				and the order of convergence, by the  $P_4+20P_8$   finite element \eqref{Vhl} (with $\ell=4$).  }
			\begin{center}  \begin{tabular}{c|rr|rr|rr} \hline
					grid  & $ \|u-u_h\|_{0}$ &$h^n$ &$ |u-u_h|_{1}$ & $h^n$ & $  |u-u_h|_{2}$ & $h^n$   \\
					\hline
					1&     0.0510863&0.0&     0.3390932&0.0&     4.3623945&0.0\\
					2&     0.0105988&2.3&     0.0802464&2.1&     1.7878478&1.3\\
					3&     0.0004250&4.6&     0.0057934&3.8&     0.3314596&2.4\\
					4&     0.0000079&5.8&     0.0003274&4.1&     0.0476001&2.8\\
					\hline
				\end{tabular}\end{center} \end{table}
				
 Lastly, we apply the low-order $ V_{h,4}$ \eqref{Vhl3} method to solving the 3D biharmoic equation.
 Here the full $P_4$  polynomial space is enriched by twenty $P_6$ polynomials, on each tetrahedron.
 We call it the $P_4+20P_6$ element.
 Due to a better condition number, this method is more stable than the above $P_4+20P_8$ element.
 From Table \ref{t3}, we can see the $P_4+20P_6$ element converges also at
	 order 3, 4, and 5 in
 $H^2$-norm, $H^1$-norm and $L^2$-norm, respectively.

				\begin{table}[h!]
					\caption{\label{t3} The error
						and the order of convergence, by the  $P_4+20P_6$   finite element \eqref{Vhl3}.  }
					\begin{center}  \begin{tabular}{c|rr|rr|rr} \hline
							grid  & $ \|u-u_h\|_{0}$ &$h^n$ &$ |u-u_h|_{1}$ & $h^n$ & $  |u-u_h|_{2}$ & $h^n$   \\
							\hline
							1&     0.0249382&0.0&     0.3040140&0.0&     5.9311094&0.0\\
							2&     0.0041104&2.6&     0.0645440&2.2&     2.0850503&1.5\\
							3&     0.0001847&4.5&     0.0061410&3.4&     0.3909556&2.4\\
							4&     0.0000052&5.1&     0.0004154&3.9&     0.0531467&2.9\\
							\hline
		\end{tabular}\end{center} \end{table}

\section{Appendix}
All basis functions in $P_4+20P_6$ element and $P_5+36P_7$ element will be presented in this section. Let three vertices index $(i,j,k)$ on $F_m$ be
$
(2,3,4),(3,4,1),(4,1,2),(1,2,3)
$
for $m=1,\cdots,4$, respectively. And let $\tau_m(l),1\leq m\neq l\leq 4$ be the local index of vertex $l$ on $m$-th face, for example $\tau_{3}(1)=2,\tau_{4}(1)=3.$ 
\subsection{Basis functions of the $P_4$+20$P_6$ element}
The shape function space of the $P_4$+20$P_6$ element can be equivalently rewritten as
$$
\tilde{\mathcal{P}}^+_4(T)=P_4(T)+\tilde{B}_4,
$$
\begin{align*} 
\tilde{B}_4=&\mathrm{span}\{
\lambda_t^2\lambda_m^3(3\lambda_m-4),b_{F_3}\lambda_1^2(3\lambda_1-4),b_{F_4}\lambda_1^2(3\lambda_1-4),b_{F_4}\lambda_2^2(3\lambda_2-4),b_{F_1}\lambda_2^2(3\lambda_2-4)\\&b_{F_1}\lambda_3^2(3\lambda_3-4),b_{F_2}\lambda_3^2(3\lambda_3-4),b_{F_2}\lambda_4^2(3\lambda_4-4),b_{F_3}\lambda_4^2(3\lambda_4-4)
\},~1\leq m\neq t\leq 4.
\end{align*}
Firstly, we show the following 24 basis functions dual to the degrees of freedom of \eqref{degree14}.
\begin{align*}
\phi^{(N,4)}_{m,1}&=6r_m(\tilde{\phi}^{(N,4)}_{m,0}+5\tilde{\phi}^{(N,4)}_{m,i}-10\tilde{\phi}^{(N,4)}_{m,j+4}
-10\tilde{\phi}^{(4)}_{m,k+4}),\\
\phi^{(N,4)}_{m,2}&=6r_m(-4\tilde{\phi}^{(N,4)}_{m,0}+5\tilde{\phi}^{(N,4)}_{m,i}+10\tilde{\phi}^{(N,4)}_{m,j}+5\tilde{\phi}^{(N,4)}_{m,k}
+10\tilde{\phi}^{(N,4)}_{m,j+4}),\\
\phi^{(N,4)}_{m,3}&=6r_m(-4\tilde{\phi}^{(N,4)}_{m,0}+5\tilde{\phi}^{(N,4)}_{m,i}+
5\tilde{\phi}^{(N,4)}_{m,j}+10\tilde{\phi}^{(N,4)}_{m,k}+10\tilde{\phi}^{(N,4)}_{m,k+4}),\\
\phi^{(N,4)}_{m,4}&=12r_m(11\tilde{\phi}^{(N,4)}_{m,0}-10\tilde{\phi}^{(N,4)}_{m,i}-15\tilde{\phi}^{(N,4)}_{m,j}-15\tilde{\phi}^{(N,4)}_{m,k}-25\tilde{\phi}^{(N,4)}_{m,j+4}-25\tilde{\phi}^{(N,4)}_{m,k+4}),
\\
\phi^{(N,4)}_{m,5}&=6r_m(-3\tilde{\phi}^{(N,4)}_{m,0}-5\tilde{\phi}^{(N,4)}_{m,i}+5\tilde{\phi}^{(N,4)}_{m,j}-5\tilde{\phi}^{(N,4)}_{m,k}+50\tilde{\phi}^{(N,4)}_{m,j+4}),\\
\phi^{(N,4)}_{m,6}&=6r_m(-3\tilde{\phi}^{(N,4)}_{m,0}-5\tilde{\phi}^{(N,4)}_{m,i}-
5\tilde{\phi}^{(N,4)}_{m,j}+5\tilde{\phi}^{(N,4)}_{m,k}+50\tilde{\phi}^{(N,4)}_{m,k+4}),
\end{align*}
where $m=1,\cdots,4$. Those 24 functions vanish for degrees of freedom of \eqref{degree11} to \eqref{degree13} and satisfy
\begin{align*}
\mathbb{N}^{(4)}_{m,l}(\phi_{n,t}^{(N,4)})=\delta_{m,n}\delta_{l,t},~1\leq m,n\leq 4,~1\leq l,t\leq 6.
\end{align*}
Next, we present the following basis function dual to the degree of freedom of \eqref{degree13}.
\begin{equation}
\phi^{(T,4)}_1=\tilde{\phi}^{(T,4)}_1+\sum_{m=1}^{4}\sum_{l=1}^{3}
\frac{2}{r_m}\phi^{(N,4)}_{m,l}+\sum_{m=1}^{4}\sum_{l=4}^{6}\frac{4}{3r_m}\phi^{(N,4)}_{m,l}.
\end{equation}
Here $\phi_1^{T,4}$ vanishes for degrees of freedom of \eqref{degree11},\eqref{degree12} and \eqref{degree14} and satisfies
$$
\mathbb{T}^{(4)}_1(\phi^{(T,4)}_1)=1.
$$

Then, 12 basis functions dual to degrees of freedom of \eqref{degree12} are shown as follows,
\a{
  \phi^{(F,4)}_{m,1} & = \tilde{\phi}^{(F,4)}_{m,1} -
  \frac{1}{r_i}(6\phi^{(N,4)}_{i,1}+6\phi^{(N,4)}_{i,2}+2\phi^{(N,4)}_{i,3}
    +2\phi^{(N,4)}_{i,4}+2\phi^{(N,4)}_{i,5}+4\phi^{(N,4)}_{i,6})\\ &\qquad
    +   \frac{1}{r_j}(6\phi^{(N,4)}_{j,3}+\phi^{(N,4)}_{j,4}+2\phi^{(N,4)}_{j,5})
        +\frac{1}{r_k}(6\phi^{(N,4)}_{k,2}+2\phi^{(N,4)}_{k,4}+\phi^{(N,4)}_{k,6})\\ &\qquad
   -  (\mathbf{n}_m)^T((\frac{2\mathbf{n}_i}{r_i}+\frac{6\mathbf{n}_m}{r_m})\phi^{(N,4)}_{m,1}+
    (\frac{\mathbf{n}_k}{r_k}+\frac{2\mathbf{n}_m}{r_m})\phi^{(N,4)}_{m,5}+  (\frac{\mathbf{n}_j}{r_j}+
           \frac{2\mathbf{n}_m}{r_m})\phi^{(N,4)}_{m,6}), \\
  \phi^{(F,4)}_{m,2} & = \tilde{\phi}^{(F,4)}_{m,2} -
  \frac{1}{r_j}(6\phi^{(N,4)}_{j,1}+2\phi^{(N,4)}_{j,2}+6\phi^{(N,4)}_{j,3}
  +2\phi^{(N,4)}_{j,4}+4\phi^{(N,4)}_{j,5}+2\phi^{(N,4)}_{j,6})\\ &\qquad
  +\frac{1}{r_k}(6\phi^{(N,4)}_{k,3}+2\phi^{(N,4)}_{k,4}+\phi^{(N,4)}_{k,5})
  +   \frac{1}{r_i}(6\phi^{(N,4)}_{i,1}+\phi^{(N,4)}_{i,5}+2\phi^{(N,4)}_{i,6})
  \\ &\qquad
  -  (\mathbf{n}_m)^T((\frac{2\mathbf{n}_j}{r_j}+\frac{6\mathbf{n}_m}{r_m})\phi^{(N,4)}_{m,2}+
  (\frac{\mathbf{n}_k}{r_k}+\frac{2\mathbf{n}_m}{r_m})\phi^{(N,4)}_{m,4}+  (\frac{\mathbf{n}_i}{r_i}+
  \frac{2\mathbf{n}_m}{r_m})\phi^{(N,4)}_{m,6}), \\
   \phi^{(F,4)}_{m,3} & = \tilde{\phi}^{(F,4)}_{m,3} -
   \frac{1}{r_k}(2\phi^{(N,4)}_{k,1}+6\phi^{(N,4)}_{k,2}+6\phi^{(N,4)}_{k,3}
   +4\phi^{(N,4)}_{k,4}+2\phi^{(N,4)}_{k,5}+2\phi^{(N,4)}_{k,6})\\ &\qquad
   +   \frac{1}{r_i}(6\phi^{(N,4)}_{i,2}+\phi^{(N,4)}_{i,4}+2\phi^{(N,4)}_{i,6})
   +\frac{1}{r_j}(6\phi^{(N,4)}_{j,1}+2\phi^{(N,4)}_{j,5}+\phi^{(N,4)}_{j,6})\\ &\qquad
   -  (\mathbf{n}_m)^T((\frac{2\mathbf{n}_k}{r_k}+\frac{6\mathbf{n}_m}{r_m})\phi^{(N,4)}_{m,3}+
   (\frac{\mathbf{n}_j}{r_j}+\frac{2\mathbf{n}_m}{r_m})\phi^{(N,4)}_{m,4}+  (\frac{\mathbf{n}_i}{r_i}+
   \frac{2\mathbf{n}_m}{r_m})\phi^{(N,4)}_{m,5}), }
where $m=1,\cdots,4.$ These 12 basis functions satisfy
$$
\mathbb{F}^{(4)}_{m,l}(\phi^{(F,4)}_{n,t})=\delta_{m,n}\delta_{l,t},~1\leq m,n\leq4,~1\leq l,t\leq 3,
$$
and vanish for the degrees of freedom of \eqref{degree11},\eqref{degree13} and \eqref{degree14}. Finally, we give last 18 basis functions which dual to the degrees of freedom of \eqref{degree12}.
\a{   \phi^{(E,4)}_{m,1}& = \tilde{\phi}^{(E,4)}_{m,1} +
     \frac{1}{r_i}(6\phi^{(N,4)}_{i,\tau_i(j)}+2\phi^{(N,4)}_{i,\tau_{i}(\tilde{i})}+2\phi^{(N,4)}_{i,\tau_i(\tilde{j})}
       +\phi^{(N,4)}_{i,\tau_{i}(j)+3}+2\phi^{(N,4)}_{i,\tau_{i}(\tilde{i})+3}\\ &\qquad
     +2\phi^{(N,4)}_{i,\tau_{i}(\tilde{j})+3})
     + \frac{2}{r_j}\phi^{N,4}_{j,\tau_j(i)}
      +(\frac{2\mathbf{n}_i}{r_i}+\frac{2\mathbf{n}_j}{r_j})^T(\phi^{(N,4)}_{\tilde{i},\tau_{\tilde{i}}(i)}\mathbf{n}_{\tilde{i}}
      +\phi^{N,4}_{\tilde{j},\tau_{\tilde{j}}(i)}\mathbf{n}_{\tilde{j}}),\\
    \phi^{(E,4)}_{m,2}& = \tilde{\phi}^{(E,4)}_{m,2} +
    \frac{1}{r_i}(12\phi^{(N,4)}_{i,\tau_i(j)}+2\phi^{(N,4)}_{i,\tau_{i}(\tilde{i})+3}+2\phi^{(N,4)}_{i,\tau_i(\tilde{j})+3})
    +(12\phi^{(N,4)}_{j,\tau_{j}(i)}+2\phi^{(N,4)}_{j,\tau_{j}(\tilde{i})+3}\\ &\qquad
    +2\phi^{(N,4)}_{j,\tau_{j}(\tilde{j})+3})   
    +(\frac{2\mathbf{n}_i}{r_i}+\frac{2\mathbf{n}_j}{r_j})^T(\phi^{(N,4)}_{\tilde{i},\tau_{\tilde{i}}(\tilde{j})+3}\mathbf{n}_{\tilde{j}}
    +\phi^{N,4}_{\tilde{j},\tau_{\tilde{j}}(\tilde{i})+3}\mathbf{n}_{\tilde{j}}),\\
     \phi^{(E,4)}_{m,3}& = \tilde{\phi}^{(E,4)}_{m,3} +
     \frac{1}{r_j}(6\phi^{(N,4)}_{j,\tau_j(i)}+2\phi^{(N,4)}_{j,\tau_{j}(\tilde{i})}+2\phi^{(N,4)}_{j,\tau_j(\tilde{j})}
     +\phi^{(N,4)}_{j,\tau_{j}(i)+3}+2\phi^{(N,4)}_{j,\tau_{j}(\tilde{i})+3}\\ &\qquad
     +2\phi^{(N,4)}_{i,\tau_{i}(\tilde{j})+3})
     + \frac{2}{r_i}\phi^{N,4}_{i,\tau_i(j)}
     +(\frac{2\mathbf{n}_i}{r_i}+\frac{2\mathbf{n}_j}{r_j})^T(\phi^{(N,4)}_{\tilde{i},\tau_{\tilde{i}}(j)}\mathbf{n}_{\tilde{i}}
     +\phi^{N,4}_{\tilde{j},\tau_{\tilde{j}}(j)}\mathbf{n}_{\tilde{j}}), }
 where $i$ and $j(i<j)$ form the $m$-th edge of the tetrahedron, $m=1,\cdots,6$. $\tilde{i}$ and $\tilde{j}(\tilde{i}<\tilde{j})$ are the other two index of vertices on the tetrahedron.(i.e. if $(i,j)=(2,4)$ then $(\tilde{i},\tilde{j}) = (1,3).$) They vanish for degrees of freedom of \eqref{degree12} to \eqref{degree14} and satisfy 
$$
\mathbb{E}^{(4)}_{m,l}(\phi^{(E,4)}_{n,t})=\delta_{m,n}\delta_{l,t},~1\leq m,n\leq 6,~1\leq l,t\leq 3.
$$

\subsection{The basis functions of $P_5+36P_7$ element}
In this section, let $\sigma_m(t,l),t\neq m,l\neq m, 1\leq t,l,m\leq 4$ be a indicator function, which represents the index of the basis function $\lambda_t^2\lambda_l$ on face $m$. For example, $\sigma_2(4,1)=6,\sigma_2(1,4)=9,\sigma_3(1,4)=7$. And the index of basis functions can be found in \eqref{index5}.  
The shape function space of the $P_5$+36$P_7$ element can be equivalently rewritten as
$$
	\tilde{\mathcal{P}}^+_5(T)=P_5(T)+\tilde{B}_5,
$$
	\begin{align*}
\tilde{B}_5=	&\mathrm{span}\{66\lambda_m^2\lambda_t^5-120\lambda_m^2\lambda_t^4,\\&99\lambda_m^2\lambda_t^5-165\lambda_m^3\lambda_t^4-135\lambda_m^2\lambda_t^4+180\lambda_m^3\lambda_t^3,\\&
	66\lambda_4\lambda_1^5\lambda_2-120\lambda_4\lambda_1^4\lambda_2,
	66\lambda_1^5\lambda_2\lambda_3-120\lambda_1^4\lambda_2\lambda_3,\\&
	66\lambda_1\lambda_2^5\lambda_3-120\lambda_1\lambda_2^4\lambda_3,
	66\lambda_2^5\lambda_3\lambda_4-120\lambda_2^4\lambda_3\lambda_4,\\&
	66\lambda_2\lambda_3^5\lambda_4-120\lambda_2\lambda_3^4\lambda_4,
	66\lambda_3^5\lambda_4\lambda_1-120\lambda_3^4\lambda_4\lambda_1,\\&
	66\lambda_3\lambda_4^5\lambda_1-120\lambda_3\lambda_4^4\lambda_1,
	66\lambda_4^5\lambda_1\lambda_2-120\lambda_4^4\lambda_1\lambda_2,\\&
	165\lambda_1^4\lambda_2^2\lambda_3-180\lambda_1^3\lambda_2^2\lambda_3,
	165\lambda_2^4\lambda_3^2\lambda_4-180\lambda_2^3\lambda_3^2\lambda_4,\\&
	165\lambda_3^4\lambda_4^2\lambda_1-180\lambda_3^3\lambda_4^2\lambda_1,
	165\lambda_4^4\lambda_1^2\lambda_2-180\lambda_4^3\lambda_1^2\lambda_2
	\},~1\leq m\neq t\leq 4.
	\end{align*}
Firstly, we show the following 40 basis functions dual to the degrees of freedom of \eqref{degree24}.
\a{
	\phi^{(N,5)}_{m,1}&=r_m( -10\tilde{\phi}^{(N,5)}_{m,0}-450\tilde{\phi}^{(N,5)}_{m,i}
	       +560\tilde{\phi}^{(N,5)}_{m,i+4}+180\tilde{\phi}^{(N,5)}_{m,j+8}+180\tilde{\phi}^{(N,5)}_{m,k+8}),\\
   \phi^{(N,5)}_{m,2}& =r_m( 80\tilde{\phi}^{(N,5)}_{m,0}-90\tilde{\phi}^{(N,5)}_{m,i}-540\tilde{\phi}^{(N,5)}_{m,j}-90\tilde{\phi}^{(N,5)}_{m,k}
               +560\tilde{\psi}^{(N,5)}_{m,j+4}-180\tilde{\phi}^{(N,5)}_{m,j+8}),\\
    \phi^{(N,5)}_{m,2}& =r_m( 80\tilde{\phi}^{(N,5)}_{m,0}-90\tilde{\phi}^{(N,5)}_{m,i}-90\tilde{\phi}^{(N,5)}_{m,j}
      -540\tilde{\phi}^{(N,5)}_{m,k}+560\tilde{\phi}^{(N,5)}_{m,k+4}-180\tilde{\phi}^{(N,5)}_{m,k+8}),\\
     \phi^{(N,5)}_{m,4}&=r_m( 60\tilde{\phi}^{(N,5)}_{m,0}-360\tilde{\phi}^{(N,5)}_{m,i}
       +90\tilde{\phi}^{(N,5)}_{m,j}-90\tilde{\phi}^{(N,5)}_{m,k}+180\tilde{\phi}^{(N,5)}_{m,j+8}\\ &\qquad\quad
      -2160\tilde{\phi}^{(N,5)}_{m,k+8}+5040\tilde{\phi}^{(N,5)}_{m,13})+r_j\tilde{b}_{j,1}+r_k\tilde{b}_{k,2},\\
    \phi^{(N,5)}_{m,5}&=r_m( 60\tilde{\phi}^{(N,5)}_{m,0}+4680\tilde{\phi}^{(N,5)}_{m,i}-90\tilde{\phi}^{(N,5)}_{m,j}
         +90\tilde{\phi}^{(N,5)}_{m,k}-5040\tilde{\phi}^{(N,5)}_{m,i+4}\\  &\qquad\quad
      - 2160\tilde{\phi}^{(N,5)}_{m,j+8}+
       180\tilde{\phi}^{(N,5)}_{m,k+8}- 5040\tilde{\phi}^{(N,5)}_{m,13})-r_j\tilde{b}_{j,1}-r_k\tilde{b}_{k,2},\\
 \phi^{(N,5)}_{m,6}&=r_m( -180\tilde{\phi}^{(N,5)}_{m,0}-1530\tilde{\phi}^{(N,5)}_{m,i}+3240\tilde{\phi}^{(N,5)}_{m,j}
      -1350\tilde{\phi}^{(N,5)}_{m,k}+1680\tilde{\phi}^{(N,5)}_{m,i+4}-3360\tilde{\phi}^{(N,5)}_{m,j+4}\\ &\qquad\quad
      + 1680\tilde{\phi}^{(N,5)}_{m,k+4}+2160\tilde{\phi}^{(N,5)}_{m,j+8}
        -2700\tilde{\phi}^{(N,5)}_{m,k+8}+5040\tilde{\phi}^{(N,5)}_{m,13})+r_j\tilde{b}_{j,1}+r_k\tilde{b}_{k,2},\\
         \phi^{(N,5)}_{m,7}&=r_m( -690\tilde{\phi}^{(N,5)}_{m,0}+2520\tilde{\phi}^{(N,5)}_{m,i}
         +2070\tilde{\phi}^{(N,5)}_{m,j}+2340\tilde{\phi}^{(N,5)}_{m,k}-1680\tilde{\phi}^{(N,5)}_{m,i+4}
         -1680\tilde{\phi}^{(N,5)}_{m,j+4}\\ &\qquad\quad
         -1680\tilde{\phi}^{(N,5)}_{m,k+4}-180\tilde{\phi}^{(N,5)}_{m,j+8}+2700\tilde{\phi}^{(N,5)}_{m,k+8}-5040\tilde{\phi}^{(5)}_{m,13})
         -r_j\tilde{b}_{j,1}-r_k\tilde{b}_{k,2},\\
    \phi^{(N,5)}_{m,8}&=r_m( -690\tilde{\phi}^{(N,5)}_{m,0}-2520\tilde{\phi}^{(N,5)}_{m,i}+2340\tilde{\phi}^{(N,5)}_{m,j}
        +2070\tilde{\phi}^{(N,5)}_{m,k}+3360\tilde{\phi}^{(N,5)}_{m,i+4}-1680\tilde{\phi}^{(N,5)}_{m,j+4}\\ &\qquad\quad
     -1680\tilde{\phi}^{(N,5)}_{m,k+4}+2700\tilde{\phi}^{(N,5)}_{m,j+8}
    -180\tilde{\phi}^{(N,5)}_{m,k+8}+5040\tilde{\phi}^{(N,5)}_{m,13})
       +r_j\tilde{b}_{j,1}+r_k\tilde{b}_{k,2},\\
    \phi^{(N,5)}_{m,9}&=r_m( -180\tilde{\phi}^{(N,5)}_{m,0}+3510\tilde{\phi}^{(N,5)}_{m,i}-1350\tilde{\phi}^{(N,5)}_{m,j}
  +3240\tilde{\phi}^{(N,5)}_{m,k}-3360\tilde{\phi}^{(N,5)}_{m,i+4}+1680\tilde{\phi}^{(N,5)}_{m,j+4}\\ &\qquad\quad
   -3360\tilde{\phi}^{(N,5)}_{m,k+4}-2700\tilde{\phi}^{(N,5)}_{m,j+8}+2160\tilde{\phi}^{(N,5)}_{m,k+8}-5040\tilde{\phi}^{(5)}_{m,13})
      -r_j\tilde{b}_{j,1}-r_k\tilde{b}_{k,2},\\
     \phi^{(N,5)}_{m,10}&=r_m( 2400\tilde{\phi}^{(N,5)}_{m,0}-8820\tilde{\phi}^{(N,5)}_{m,i}-8820\tilde{\phi}^{(N,5)}_{m,j}
     -8820\tilde{\phi}^{(N,5)}_{m,k}+6720\tilde{\phi}^{(N,5)}_{m,i+4}+6720\tilde{\phi}^{(N,5)}_{m,j+4}\\ &\qquad\quad
       + 6720\tilde{\phi}^{(N,5)}_{m,k+4}),
  } where $m=1,\cdots,4$ and
\begin{align*}
  \tilde{b}_{m,1}&=(240\tilde{\phi}^{(N,5)}_{m,0}-270\tilde{\phi}^{(N,5)}_{m,i}-1620\tilde{\phi}^{(N,5)}_{m,j}-270\tilde{\phi}^{(N,5)}_{m,k}
              +1680\tilde{\phi}^{(N,5)}_{m,j+4}-540\tilde{\phi}^{(N,5)}_{m,j+8}),\\
   \tilde{b}_{m,2}&=(30\tilde{\phi}^{(N,5)}_{m,0}+1350\tilde{\phi}^{(N,5)}_{m,i}-1680\tilde{\phi}^{(N,5)}_{m,i+4}-540\tilde{\phi}^{(N,5)}_{m,j+8}
       -540\tilde{\phi}^{(N,5)}_{m,k+8}).
\end{align*}
They vanish for degrees of freedom of \eqref{degree21} to \eqref{degree23} and satisfy
$$
\mathbb{N}^{(5)}_{m,l}(\phi^{(N,5)}_{n,t})=\delta_{m,n}\delta_{l,t},~1\leq m,n\leq 4,~1\leq l,t\leq 10.
$$
 Then, four basis functions dual to degrees of freedom of \eqref{degree23}, which vanish for degrees of freedom of \eqref{degree21},\eqref{degree22} and \eqref{degree24}, are shown as
\begin{align*}
\phi^{(T,5)}_{m}= \tilde{\phi}^{(T,5)}_{m}& -\frac{4}{3r_m}(6\phi^{(N,5)}_{m,1}+6\phi^{(N,5)}_{m,2}+6\phi^{(N,5)}_{m,3} \\
+& 3\phi^{(N,5)}_{m,4}+3\phi^{(N,5)}_{m,5}+3\phi^{(N,5)}_{m,6}+3\phi^{(N,5)}_{m,7}
+3\phi^{(N,5)}_{m,8}+3\phi^{(N,5)}_{m,9}+2\phi^{(N,5)}_{m,10})
\\+&\frac{2}{3r_i}(18\phi^{(N,5)}_{i,\tau_i(m)}+6\phi^{(N,5)}_{i,\sigma_i(\tau_i(m),\tau_i(j))}+6\phi^{(N,5)}_{i,\sigma_i(\tau_i(m),\tau_i(k))}+
3\phi^{(N,5)}_{i,\sigma_i(\tau_i(j),\tau_i(m))}\\+&3\phi^{(N,5)}_{i,\sigma_i(\tau_i(k),\tau_i(m))}+2\phi^{(N,5)}_{i,10})+
\frac{2}{3r_j}(18\phi^{(N,5)}_{j,1}+6\phi^{(N,5)}_{j,\sigma_j(\tau_j(m),\tau_j(i))}\\+&6\phi^{(N,5)}_{j,\sigma_j(\tau_j(m),\tau_j(k))}+
3\phi^{(N,5)}_{j,\sigma_j(\tau_j(i),\tau_j(m))}+3\phi^{(N,5)}_{j,\sigma_j(\tau_j(k),\tau_j(m))}+2\phi^{(N,5)}_{j,10})
\\+&\frac{2}{3r_k}(18\phi^{(N,5)}_{k,1}+6\phi^{(N,5)}_{k,\sigma_k(\tau_k(m),\tau_k(i))}+6\phi^{(N,5)}_{k,\sigma_k(\tau_k(m),\tau_k(j))}+
3\phi^{(N,5)}_{k,\sigma_k(\tau_k(i),\tau_k(m))}\\+&3\phi^{(N,5)}_{k,\sigma_k(\tau_k(j),\tau_k(m))}+2\phi^{(N,5)}_{k,10}),~m=1,\cdots,4,
\end{align*}
and they satisfy
$$
\mathbb{T}^{(5)}_{m}(\phi^{(T,5)}_{t})=\delta_{m,t},~1\leq m,t\leq 4.
$$
Next, we present the following 24 basis functions associated to degrees of freedom of \eqref{degree22}.
\begin{align*}
\phi^{(F,5)}_{m,1}= \tilde{\phi}^{(F,5)}_{m,1}& +\frac{1}{r_i}(12\phi^{(N,5)}_{i,1}
      +12\phi^{(N,5)}_{i,2}+3\phi^{(N,5)}_{i,3}
     +6\phi^{(N,5)}_{i,4}+3\phi^{(N,5)}_{i,5}+3\phi^{(N,5)}_{i,6}+6\phi^{(N,5)}_{i,7}
         \\&  +2\phi^{(N,5)}_{i,8}+2\phi^{(N,5)}_{i,9}+2\phi^{(N,5)}_{i,10})
       + \frac{1}{r_j}(12\phi^{(N,5)}_{j,3}+2\phi^{(N,5)}_{j,8}+\phi^{(N,5)}_{j,9})\\ &
       +\frac{1}{r_k}(12\phi^{(N,5)}_{k,2}+2\phi^{(N,5)}_{k,6}+\phi^{(N,5)}_{k,7})
+\frac{(\mathbf{n}_i^T\mathbf{n}_m)}{r_i}(9\phi^{(N,5)}_{m,1}+2\phi^{(N,5)}_{m,4}+2\phi^{(N,5)}_{m,5})
   \\ &
  + \frac{(\mathbf{n}_j^T\mathbf{n}_m)}{r_j}(12\phi^{(N,5)}_{m,1}+\phi^{(N,5)}_{m,4}+2\phi^{(N,5)}_{m,5})
  +\frac{(\mathbf{n}_k^T\mathbf{n}_m)}{r_k}(12\phi^{(N,5)}_{m,1}+2\phi^{(N,5)}_{m,4}+\phi^{(N,5)}_{m,5}),
\end{align*}
\begin{align*}
    \phi^{(F,5)}_{m,2}= \tilde{\phi}^{(F,5)}_{m,2}&
      +\frac{1}{r_j}(12\phi^{(N,5)}_{j,1}+3\phi^{(N,5)}_{j,2}+12\phi^{(N,5)}_{j,3}
      +3\phi^{(N,5)}_{j,4}+6\phi^{(N,5)}_{j,5}+2\phi^{(N,5)}_{j,6}\\&
    +2\phi^{(N,5)}_{j,7}+6\varphi^{N,5}_{j,8}+3\phi^{(N,5)}_{j,9}+2\phi^{(N,5)}_{j,10})
      + \frac{1}{r_i}(12\phi^{(N,5)}_{i,1}+2\phi^{(N,5)}_{i,4}+\phi^{(N,5)}_{i,5}) \\&
  +\frac{1}{r_k}(12\phi^{(N,5)}_{k,3}+\phi^{(N,5)}_{k,8}+2\phi^{(N,5)}_{k,9})
   +\frac{(\mathbf{n}_j^T\mathbf{n}_m)}{r_j}(9\phi^{(N,5)}_{m,2}+2\phi^{(N,5)}_{m,6}+2\phi^{(N,5)}_{m,7})\\
   &+ \frac{(\mathbf{n}_i^T\mathbf{n}_m)}{r_i}(12\phi^{(N,5)}_{m,2}+2\phi^{(N,5)}_{m,6}+\phi^{(N,5)}_{m,7})
    +\frac{(\mathbf{n}_k^T\mathbf{n}_m)}{r_k}(12\phi^{(N,5)}_{m,2}+\phi^{(N,5)}_{m,6}
    +2\phi^{(N,5)}_{m,7}),
\end{align*}
\begin{align*}
\phi^{(F,5)}_{m,3}= \tilde{\phi}^{(F,5)}_{m,3}&
        +\frac{1}{r_k}(3\phi^{(N,5)}_{k,1}+12\phi^{(N,5)}_{k,2}+12\phi^{(N,5)}_{k,3}
        +2\phi^{(N,5)}_{k,4}+2\phi^{(N,5)}_{k,5}+6\phi^{(N,5)}_{k,6} \\&
     +3\phi^{(N,5)}_{k,7}+3\phi^{(N,5)}_{k,8}+6\phi^{(N,5)}_{k,9}+2\phi^{(N,5)}_{k,10})
    + \frac{1}{r_i}(12\phi^{(N,5)}_{i,2}+\phi^{(N,5)}_{i,6}+2\phi^{(N,5)}_{i,7}) \\&
   +\frac{1}{r_j}(12\phi^{(N,5)}_{j,2}+\phi^{(N,5)}_{j,4}+2\phi^{(N,5)}_{j,5})
   +\frac{(\mathbf{n}_k^T\mathbf{n}_m)}{r_k}(9\phi^{(N,5)}_{m,3}+2\phi^{(N,5)}_{m,8}+2\phi^{(N,5)}_{m,9})\\
  &+ \frac{(\mathbf{n}_i^T\mathbf{n}_m)}{r_i}(12\phi^{(N,5)}_{m,3}+\phi^{(N,5)}_{m,8}
     +2\phi^{(N,5)}_{m,9})
    +\frac{(\mathbf{n}_j^T\mathbf{n}_m)}{r_j}(12\phi^{(N,5)}_{m,3}+2\phi^{(N,5)}_{m,8}
     +\phi^{(N,5)}_{m,9}),
\end{align*}
\begin{align*}
    \phi^{(F,5)}_{m,4}= \tilde{\phi}^{(F,5)}_{m,4}&
          -\frac{1}{r_i}(36\phi^{(N,5)}_{i,1}+12\phi^{(N,5)}_{i,4}+6\phi^{(N,5)}_{i,5}
         +6\phi^{(N,5)}_{i,7}+2\phi^{(N,5)}_{i,8}+2\phi^{(N,5)}_{i,10})
    \\& +\frac{1}{r_k}(6\phi^{(N,5)}_{k,7}+6\phi^{(N,5)}_{k,9}+\phi^{(N,5)}_{k,10})
   - \frac{1}{r_j}(36\phi^{(N,5)}_{j,3}+6\phi^{(N,5)}_{j,5}+2\phi^{(N,5)}_{j,6}
   \\& +12\phi^{(N,5)}_{j,8}+6\phi^{(N,5)}_{j,9}+2\phi^{(N,5)}_{j,10})
    +\frac{(\mathbf{n}_i^T\mathbf{n}_m)}{r_i}(4\phi^{(N,5)}_{m,4}+6\phi^{(N,5)}_{m,7}
         +2\phi^{(N,5)}_{m,10}) \\&
   + \frac{(\mathbf{n}_j^T\mathbf{n}_m)}{r_j}(6\phi^{(N,5)}_{m,4}+4\phi^{(N,5)}_{m,7}
    +2\phi^{(N,5)}_{m,10})
     +\frac{(\mathbf{n}_k^T\mathbf{n}_m)}{r_k}(6\phi^{(N,5)}_{m,4}+6\phi^{(N,5)}_{m,7}
     +\phi^{(N,5)}_{m,10}),
\end{align*}
\begin{align*}
\phi^{(F,5)}_{m,5}= \tilde{\phi}^{(F,5)}_{m,5}&
             -\frac{1}{r_i}(36\phi^{(N,5)}_{i,2}+6\phi^{(N,5)}_{i,4}+6\phi^{(N,5)}_{i,6}
      +12\phi^{(N,5)}_{i,7}+2\phi^{(N,5)}_{i,9}+2\phi^{(N,5)}_{i,10})\\&
     +\frac{1}{r_j}(6\phi^{(N,5)}_{j,5}+6\phi^{(N,5)}_{j,8}+\phi^{(N,5)}_{j,10})
      - \frac{1}{r_k}(36\phi^{(N,5)}_{k,2}+2\phi^{(N,5)}_{k,4}+12\phi^{(N,5)}_{k,6}
          +6\phi^{(N,5)}_{k,7}\\&
            +6\phi^{(N,5)}_{k,9}+2\phi^{(N,5)}_{k,10})
      +\frac{(\mathbf{n}_i^T\mathbf{n}_m)}{r_i}(4\phi^{(N,5)}_{m,5}
        +6\phi^{(N,5)}_{m,8}+2\phi^{(N,5)}_{m,10})\\
    &+ \frac{(\mathbf{n}_j^T\mathbf{n}_m)}{r_j}(6\phi^{(N,5)}_{m,5}+6\phi^{(N,5)}_{m,8}
      +\phi^{(N,5)}_{m,10})
      +\frac{(\mathbf{n}_k^T\mathbf{n}_m)}{r_k}(6\phi^{(N,5)}_{m,5}+4\phi^{(N,5)}_{m,8}
      +2\phi^{(N,5)}_{m,10}),
\end{align*}
\begin{align*}
\phi^{(F,5)}_{m,6}= \tilde{\phi}^{(F,5)}_{m,6}
    & -\frac{1}{r_j}(36\phi^{(N,5)}_{j,1}+6\phi^{(N,5)}_{j,4}+12\phi^{(N,5)}_{j,5}
     +2\phi^{(N,5)}_{j,7}+2\phi^{(N,5)}_{j,8}+2\phi^{(N,5)}_{j,10})
       \\ & +\frac{1}{r_i}(6\phi^{(N,5)}_{i,4}+6\phi^{(N,5)}_{i,6}+\phi^{(N,5)}_{i,10})
        - \frac{1}{r_k}(36\phi^{(N,5)}_{k,3}+2\phi^{(N,5)}_{k,5}+6\phi^{(N,5)}_{k,6}
       \\& +6\phi^{(N,5)}_{k,8}+12\phi^{(N,5)}_{k,9}+2\phi^{(N,5)}_{k,10})
    +\frac{(\mathbf{n}_i^T\mathbf{n}_m)}{r_i}(6\phi^{(N,5)}_{m,6}+6\phi^{(N,5)}_{m,9}
     +\phi^{(N,5)}_{m,10})\\
    & + \frac{(\mathbf{n}_j^T\mathbf{n}_m)}{r_j}(4\phi^{(N,5)}_{m,6}+6\phi^{(N,5)}_{m,9}
      +2\phi^{(N,5)}_{m,10})
      +\frac{(\mathbf{n}_k^T\mathbf{n}_m)}{r_k}(6\phi^{(N,5)}_{m,6}+4\phi^{(N,5)}_{m,9}
        +2\phi^{(N,5)}_{m,10}),
\end{align*}
where $m=1,\cdots,4.$
They vanish for degrees of freedom of \eqref{degree21},\eqref{degree23} and \eqref{degree24} and satisfy
$$
\mathbb{F}^{(5)}_{m,l}(\phi^{(F,5)}_{n,t})=\delta_{m,n}\delta_{l,t},~1\leq m,n\leq 4,~1\leq l,t\leq 6.
$$
The rest 24 basis functions, which vanish for degrees of freedom of \eqref{degree22} to \eqref{degree24} are shown as
\begin{align*}
\phi^{(E,5)}_{m,1}= \tilde{\phi}^{(E,5)}_{m,1}& -\frac{1}{3r_i}(24\phi^{(N,5)}_{i,\tau_i(j)}
        +6\varphi^{N,5}_{i,\tau_i(\tilde{i})}+6\phi^{(N,5)}_{i,\tau_i(\tilde{j})}+6\phi^{(N,5)}_{i,2(\tau_i(j)+1)}
          +6\phi^{(N,5)}_{i,2(\tau_i(j)+1)+1} \\ &
          +2\phi^{(N,5)}_{i,2(\tau_i(\tilde{i})+1)+\mathrm{sgn}(\tau_i(\tilde{i})-\tau_i(\tilde{j}))}+4\phi^{(N,5)}_{i,2(\tau_i(\tilde{i})+1)+\mathrm{sgn}(\tau_i(\tilde{j})-\tau_i(\tilde{i}))}\\&+2\phi^{(N,5)}_{i,2(\tau_i(\tilde{j})+1)+\mathrm{sgn}(\tau_i(\tilde{j})-\tau_i(\tilde{i}))}
             +4\phi^{(N,5)}_{i,2(\tau_i(\tilde{j})+1)+\mathrm{sgn}(\tau_i(\tilde{i})-\tau_i(\tilde{j}))}+2\phi^{(N,5)}_{i,10})\\ &
       +\frac{2}{r_j}\phi_{j,\tau_j(i)}
         +2(\frac{\mathbf{n}_i^T}{r_i}
    +\frac{\mathbf{n}_j^T}{r_j})(\mathbf{n}_{\tilde{i}}\phi^{(N,5)}_{{\tilde{i}},\tau_{\tilde{i}}(i)}+\mathbf{n}_{\tilde{j}}\phi^{(N,5)}_{{\tilde{j}},\tau_{\tilde{j}}(i)}), \\
   \phi^{(E,5)}_{m,2}=\tilde{\phi}^{(E,5)}_{m,2}& -\frac{1}{3r_j}(24\phi^{(N,5)}_{j,\tau_j(i)}
   +6\varphi^{N,5}_{j,\tau_j(\tilde{i})}+6\phi^{(N,5)}_{j,\tau_j(\tilde{j})}+6\phi^{(N,5)}_{j,2(\tau_j(i)+1)}
   +6\phi^{(N,5)}_{j,2(\tau_j(i)+1)+1} \\ &
   +2\phi^{(N,5)}_{j,2(\tau_j(\tilde{i})+1)+\mathrm{sgn}(\tau_j(\tilde{i})-\tau_j(\tilde{j}))}+4\phi^{(N,5)}_{j,2(\tau_j(\tilde{i})+1)+\mathrm{sgn}(\tau_j(\tilde{j})-\tau_j(\tilde{i}))}\\&+2\phi^{(N,5)}_{j,2(\tau_j(\tilde{j})+1)+\mathrm{sgn}(\tau_j(\tilde{j})-\tau_j(\tilde{i}))}
   +4\phi^{(N,5)}_{j,2(\tau_j(\tilde{j})+1)+\mathrm{sgn}(\tau_j(\tilde{i})-\tau_j(\tilde{j}))}+2\phi^{(N,5)}_{j,10})\\ &
   +\frac{2}{r_i}\phi_{i,\tau_i(j)}
   +2(\frac{\mathbf{n}_i^T}{r_i}
   +\frac{\mathbf{n}_j^T}{r_j})(\mathbf{n}_{\tilde{i}}\phi^{(N,5)}_{{\tilde{i}},\tau_{\tilde{i}}(j)}+\mathbf{n}_{\tilde{j}}\phi^{(N,5)}_{{\tilde{j}},\tau_{\tilde{j}}(j)}),
\end{align*}
\begin{align*} \phi^{(E,5)}_{m,3}= \tilde{\phi}^{(E,5)}_{m,3}
    & +\frac{1}{r_i}(36\phi^{(N,5)}_{i,\tau_i(j)}+6\phi^{(N,5)}_{i,2(\tau_i(j)+1)}
         +6\phi^{(N,5)}_{i,2(\tau_i(j)+1)+1}+2\phi^{(N,5)}_{i,\sigma_i(\tilde{i},j)}+2\phi^{(N,5)}_{i,\sigma_i(\tilde{j},j)}
    \\& +\phi^{(N,5)}_{i,10}) -\frac{1}{r_j}(24\phi^{(N,5)}_{j,\tau_j(i)}
           +2\phi^{(N,5)}_{j,2(\tau_j(i)+1)}+2\phi^{(N,5)}_{j,2(\tau_j(i)+1)+1})
      \\ & +2(\frac{\mathbf{n}_i^T}{r_i}+\frac{\mathbf{n}_j^T}{r_j})
     (\mathbf{n}_{\tilde{i}}\phi^{(N,5)}_{{\tilde{i}},\sigma_{\tilde{i}}(i,j)}+\mathbf{n}_{\tilde{j}}\phi^{(N,5)}_{{\tilde{j}},\sigma_{\tilde{j}}(i,j)}), \\
     \phi^{(E,5)}_{m,4}= \tilde{\phi}^{(E,5)}_{m,4}
       & +\frac{1}{r_j}(36\phi^{(N,5)}_{j,\tau_j(i)}+6\phi^{(N,5)}_{j,2(\tau_j(i)+1)}
       +6\phi^{(N,5)}_{j,2(\tau_j(i)+1)+1}+2\phi^{(N,5)}_{j,\sigma_j(\tilde{i},i)}+2\phi^{(N,5)}_{j,\sigma_j(\tilde{j},i)}
       \\& +\phi^{(N,5)}_{j,10}) -\frac{1}{r_i}(24\phi^{(N,5)}_{i,\tau_i(j)}
       +2\phi^{(N,5)}_{i,2(\tau_i(j)+1)}+2\phi^{(N,5)}_{i,2(\tau_i(j)+1)+1})
       \\ & +2(\frac{\mathbf{n}_i^T}{r_i}+\frac{\mathbf{n}_j^T}{r_j})
       (\mathbf{n}_{\tilde{i}}\phi^{(N,5)}_{{\tilde{i}},\sigma_{\tilde{i}}(j,i)}+\mathbf{n}_{\tilde{j}}\phi^{(N,5)}_{{\tilde{j}},\sigma_{\tilde{j}}(j,i)}),
\end{align*}
where $i$ and $j(i<j)$ form the $m$-th edge of the tetrahedron, $m=1,\cdots,6$. $\tilde{i}$ and $\tilde{j}(\tilde{i}<\tilde{j})$ are the other two index of vertices on the tetrahedron.
And those 24 functions satisfy
$$
\mathbb{E}^{(5)}_{m,l}(\phi^{(E,5)}_{n,t})=\delta_{m,n}\delta_{l,t},~1\leq m,n\leq 6,~1\leq l,t\leq 4.
$$


\end{document}